\documentclass[11pt]{amsart}

\newcommand{\Addresses}{{
  \bigskip
  \footnotesize  
  
  \noindent Gabriele Viaggi, \textsc{Mathematical Institute, University of Pisa, Pisa}\par\nopagebreak
  \textit{E-mail address}: \texttt{gabriele.viaggi@unipi.it}
  }}

\usepackage[margin=1.5in]{geometry}
\usepackage[colorlinks,citecolor=cyan,linkcolor=blue]{hyperref}

\usepackage{dsfont}
\usepackage{amsmath}
\usepackage{amsthm}
\usepackage{amssymb}
\usepackage[all,cmtip]{xy}
\usepackage{enumerate}
\usepackage{enumitem}
\usepackage{overpic}
\usepackage{float}

\newcommand{\ep}{
\epsilon
}
\newcommand{\mc}[1]{
\mathcal{#1}
}
\newcommand{\mb}[1]{
\mathbb{#1}
}

\newtheorem*{thm*}{Theorem}
\newtheorem*{cor*}{Corollary}
\newtheorem*{pro*}{Proposition}

\newtheorem{thmA}{Theorem}

\newtheorem{corA}[thmA]{Corollary}

\newtheorem{thm}{Theorem}[section]

\newtheorem{lem}[thm]{Lemma}
\newtheorem{pro}[thm]{Proposition}

\theoremstyle{definition}

\newtheorem*{defi*}{Definition}
\newtheorem{dfn}[thm]{Definition}

\title{Linear volume bounds for drilling and filling}
\author{Gabriele Viaggi}

\begin{document}

\begin{abstract}
We prove uniform linear bounds on the volume variation under drilling and filling operations on finite volume hyperbolic 3-manifolds. 
\end{abstract}

\maketitle

\section{Introduction}

There are two important operations that one can perform on a finite volume hyperbolic 3-manifold. One can drill out a simple closed geodesic and one can Dehn fill a cusp. It is a well-known phenomenon that all drillings and most fillings give back a hyperbolizable 3-manifold whose volume is larger for the first operation and smaller for the second. The goal of this short note is to prove two results that give a {\em linear} quantitative control on how much the volume can change.

\subsection*{Drilling}
Let us start with drillings. We prove the following.

\begin{thmA}
\label{thm:main}
There exists a constant $c>0$ such that the following holds. Let $M$ be a closed hyperbolic 3-manifold and let $\gamma\subset M$ be a simple closed geodesic. Suppose that $\gamma$ has length $\ell$ and has an embedded tubular neighborhood of radius at least $R\in(0,1)$. Then the unique hyperbolic metric on $M-\gamma$ has volume
\[
{\rm vol}(M-\gamma)\le{\rm vol}(M)+c\frac{\ell}{R}.
\]
\end{thmA}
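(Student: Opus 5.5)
Our plan is to construct an explicit negatively curved (or at least controlled-curvature) metric on $M-\gamma$ and compare volumes via a volume-rigidity inequality. Specifically, we will use the following well-known fact (a consequence of Gromov's simplicial volume / Besson--Courtois--Gallot type comparison, or more directly of the Agol--Storm--Thurston / Boland--Connell--Souto volume inequalities): if $N$ is a finite volume hyperbolizable 3-manifold carrying a complete metric $g$ with sectional curvatures $\le -1$ outside a region, or more robustly with Ricci curvature $\ge -2$ (normalized so that hyperbolic space is extremal), then ${\rm vol}_{hyp}(N)\le {\rm vol}_g(N)$. Even more convenient is the version: if $g$ has sectional curvature pinched in $[-a,-1]$... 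Rather than curvature upper bounds, we will aim for the Ricci lower bound version, since that inequality points in the right direction: ${\rm vol}_{hyp}(M-\gamma)\le {\rm vol}_g(M-\gamma)$ whenever ${\rm Ric}_g\ge -2$ (Besson--Courtois--Gallot for the closed case, extended to the cusped case by Boland--Connell--Souto / Storm).

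The construction: inside the embedded tube $T_R$ of radius $R$ about $\gamma$, the hyperbolic metric reads $dr^2+\sinh^2(r)d\theta^2+\cosh^2(r)dt^2$. We replace it by a warped product $dr^2+f(r)^2d\theta^2+h(r)^2dt^2$ on $(-\infty,R]\times T^2$ with $f,h$ agreeing with $\sinh,\cosh$ near $r=R$ and with $f=\lambda e^{r}$, $h=\mu e^{r}$ for $r$ small, producing a cusp. Instead of exact Ricci bound $-2$, we allow ${\rm Ric}\ge -2(1+\delta)$ on the interpolation region and then rescale by the factor $(1+\delta)^{1/2}$, multiplying volume by $(1+\delta)^{3/2}$. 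To get a linear bound we must keep the total volume in the modified region $O(\ell/R)$ while the Ricci deficit stays bounded. Since the volume of $T_R$ is $\pi\ell\sinh^2R\approx \pi\ell R^2$, the cusp end volume is about $\ell\cdot(\text{meridian length at the gluing})\cdot$const, and the interpolation across a collar of width comparable to $R$ forces the second derivatives of $\log f$ to be of order $1/R^2$, which spoils the Ricci bound.

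The main obstacle is precisely this: a naive interpolation in width $R$ creates curvature of size $1/R^2$. To fix it I would first enlarge the modification region: working not only in the tube but allowing the warping functions to be changed smoothly on a region where the change in $\log f$ is spread so that the curvature error is $O(1)$ locally, then handle the deficit by a conformal/scaling argument rather than a global rescale: use the version of the volume inequality with ${\rm Ric}\ge-2$ only required outside a set, paying ${\rm vol}$ of the bad set times a constant (e.g. via Storm's or the barycenter-map Jacobian estimate $|{\rm Jac}|\le(\,|{\rm Ric}_-|/2)^{3/2}$ pointwise). Pointwise Jacobian bounds give ${\rm vol}_{hyp}\le\int (\max(1,\rho/2))^{3/2}\,d{\rm vol}_g$ with $\rho=-\min{\rm Ric}$. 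With curvature of size $1/R^2$ on a region of volume $\sim\ell R\cdot R$ (cross-section area $\sim R\cdot\ell$, width $R$), the contribution is $\sim R^{-3}\cdot\ell R^2=\ell/R$, exactly the claimed bound. So the key steps are: (1) write down $f,h$ with $|(\log f)''|,|(\log h)''|\lesssim R^{-2}$ on $[R/2,R]$ and hyperbolic cusp below; (2) compute Ricci of the warped product, bounding it by $C/R^2$; (3) apply the pointwise barycenter Jacobian inequality to get ${\rm vol}(M-\gamma)\le {\rm vol}(M-T_R)+C\ell/R+{\rm vol}(\text{cusp})$, the cusp having volume $O(\ell R)$. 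Verifying that the integrated Jacobian bound is legitimate in the cusped, non-pinched setting is where I expect the real technical work.
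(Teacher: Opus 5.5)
There is a genuine gap, and it sits in step (3). Your geometric input is the same as the paper's: a warped-product interpolation ${\rm d}r^2+a(r)^2{\rm d}\theta^2+b(r)^2{\rm d}y^2$ from the tube to a cusp, with curvature $O(R^{-2})$ on a shell of volume $O(\ell R^2)$, plus a cusp of volume $O(\ell R)$. The shell then contributes $O(\ell/R)$ to an $L^{3/2}$ curvature integral. The problem is the inequality you feed this into.

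There is no pointwise barycenter estimate $|{\rm Jac}\,F|(x)\le(\rho(x)/2)^{3/2}$ in terms of the local Ricci deficit. In the Besson--Courtois--Gallot method, and in Storm's and Boland--Connell--Souto's extensions, the natural map $F_c$ is built from the measures $e^{-c\,d(x,y)}{\rm dvol}_g(y)$. One gets $|{\rm Jac}\,F_c|\le(c/2)^3$ uniformly in $x$ for every $c>h(g)$. Curvature enters only through the volume entropy $h(g)$, which is a global invariant. So this route yields at best ${\rm vol}(M-\gamma)\le(h(g)/2)^3\,{\rm vol}_g$. With ${\rm Ric}$ as negative as $-c/R^2$ on the shell, the natural control on $h(g)$ comes from the global Ricci lower bound via Bishop--Gromov. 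That produces a multiplicative factor in front of ${\rm vol}(M)$, which is precisely the $f(R)\,{\rm vol}(M)$ of Agol--Storm--Thurston that the theorem is meant to remove. Your first idea (allow ${\rm Ric}\ge-2(1+\delta)$ and rescale) fails for the same reason.

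The missing ingredient is the sigma invariant. On a closed 3-manifold, H\"older applied to the Yamabe functional gives $Y(M,[g])\ge-\bigl(\int_M|S_g|^{3/2}{\rm dvol}_g\bigr)^{2/3}$ for every $g$ (Proposition~\ref{pro:integral scalar}). Perelman's work, as made explicit by Anderson and Kleiner--Lott, gives $\sigma(M)=-6\,{\rm vol}_{\rm hyp}(M)^{2/3}$. Hence $6^{3/2}{\rm vol}_{\rm hyp}(M)\le\int_M|S_g|^{3/2}{\rm dvol}_g$, which is additive, local, and needs only scalar curvature. Using the negative part of $S_g$ and $S_g\ge 3\min{\rm Ric}$, it even implies the bound ${\rm vol}_{\rm hyp}\le\int\max(1,\rho/2)^{3/2}{\rm dvol}_g$ you wanted. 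So your bookkeeping survives once the inequality is sourced correctly.

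This inequality is only available for closed manifolds, and you leave the cusped case open. The paper closes it in two moves. First, it approximates $M-\gamma$ by hyperbolic Dehn fillings $M_n$ of $M-T_\epsilon$, whose volumes converge by Thurston's hyperbolic Dehn filling theorem. Second, it extends the cusp metric into each filling solid torus via the quantitative $2\pi$-theorem (Proposition~\ref{pro:metric on tube}). The $L^{3/2}$ scalar-curvature integral of that extension tends to $6^{3/2}$ times half the area of the horospherical torus, which after dividing by $6^{3/2}$ is $O(\ell R)$.
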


\begin{corA}
\label{cor:main}
There exists a constant $c>0$ such that the following holds. Let $M$ be a closed hyperbolic 3-manifold and let $\gamma\subset M$ be a shortest non-trivial closed geodesic in $M$. Then the unique hyperbolic metric on $M-\gamma$ has volume
\[
{\rm vol}(M-\gamma)\le{\rm vol}(M)+c\ell
\]
where $\ell=2\cdot{\rm inj}(M)$ is the length of $\gamma$.
\end{corA}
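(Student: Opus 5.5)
The plan is to deduce the corollary from Theorem~\ref{thm:main} by splitting into two regimes according to the injectivity radius, with a lower bound $R\ge R_0>0$ on the tube radius needed only when $\ell$ is small.

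\emph{Case of small $\ell$.} Fix the Margulis constant $\epsilon_3$ and a small $\epsilon<\epsilon_3$. Suppose $\ell=2\,{\rm inj}(M)<\epsilon$. Since $\gamma$ is a shortest closed geodesic, it is the core of a Margulis tube, and in particular it is simple. We claim it has an embedded tubular neighborhood whose radius is bounded below by a universal constant. One way to see this is via the standard tube-radius estimates (Meyerhoff, or Hodgson--Kerckhoff): the radius of the maximal embedded tube around a geodesic of length $\ell$ tends to $\infty$ as $\ell\to0$. In particular it is at least some universal $R_0\in(0,1)$ for all $\ell<\epsilon$. A more elementary route uses only that $\gamma$ is shortest. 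If two lifts of $\gamma$ in $\mathbb{H}^3$ came within distance $2R_0$ of each other, then a short arc between them, together with subarcs of $\gamma$, would produce a nontrivial closed loop. That loop would be homotopically distinct from powers of $\gamma$, and its length would be at most about $\ell+2R_0$. Discreteness together with the Margulis lemma rules this out, because elements that translate a point by less than $\epsilon_3$ generate a virtually abelian group, which is then cyclic since $M$ is closed. With $R\ge R_0$ established, Theorem~\ref{thm:main} gives ${\rm vol}(M-\gamma)\le {\rm vol}(M)+(c/R_0)\ell$.

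\emph{Case of $\ell\ge\epsilon$.} Here the tube radius is at least ${\rm inj}(M)/2$ up to constants. Indeed, the minimal distance between distinct lifts of $\gamma$ is bounded below by a constant multiple of ${\rm inj}(M)$: a short common perpendicular would again produce a nontrivial loop shorter than $\gamma$. Taking $R=\min\{\ell/4,1/2\}\ge \epsilon/4$, Theorem~\ref{thm:main} gives an additive error at most $4c\ell/\epsilon$, which is linear in $\ell$. Simplicity of $\gamma$ is automatic, since a shortest geodesic cannot self-intersect: otherwise it would split into two shorter nontrivial loops.

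Combining the two cases with $c'=\max\{c/R_0,\,4c/\epsilon\}$ gives the corollary.

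The main obstacle is the tube-radius lower bound. Specifically, I need a clean argument that a shortest geodesic has an embedded tube of radius bounded below by a universal constant, or by a constant multiple of ${\rm inj}(M)$ when ${\rm inj}(M)$ is large. The first thing I would try is the covering argument: take a point $p$ on $\gamma$ at distance less than $2R$ from another lift, and form the loop given by going out along the perpendicular and returning along $\gamma$. Showing that this loop is not freely homotopic to a power of $\gamma$ requires the Margulis-lemma input in the thin case. In the thick case, I would use that the injectivity radius at points of $\gamma$ is at least ${\rm inj}(M)$.
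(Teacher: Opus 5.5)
Your proposal is correct and follows essentially the same route as the paper. The paper likewise obtains a universal lower bound on the embedded tube radius by combining the $\ell/4$ bound for a shortest geodesic (\cite[Proposition 1.11]{GMT}, which is your orthogeodesic-loop argument) with the Margulis lemma for small $\ell$, and then applies Theorem~\ref{thm:main}; your case split just makes explicit how the two inputs combine, with $\min\{R_0,\epsilon/4\}$ serving as the paper's universal $R_0$.
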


Similar estimates on the volume increase under drilling operations on hyperbolic 3-manifolds were already considered by Bridgeman \cite{B}, Agol \cite{A}, Agol, Storm, Thurston, and Dunfield \cite[Theorem 10.1]{AST} where they proved the existence of explicit functions $f,g>1$ such that ${\rm vol}(M-\gamma)\le f(R){\rm vol}(M)+g(R)\ell$. So, the main achievement of this note is to get rid of the multiplicative constant in front of ${\rm vol}(M)$. This shows that the increase in volume only depends on the {\em local} geometry around $\gamma$. 

Theorem \ref{thm:main} goes in the direction of a conjecture of Bridgeman \cite{B}, which asks whether there is a linear upper bound on the volume increase under drilling operations. While very good estimates are available when the drilled geodesics are short (see the work of Hodgson and Kerckhoff \cite{HK}), it is still unclear what happens for long geodesics. Note that as long as we can uniformly bound from below $R$ the increase in volume is at most linear. This leads immediately to Corollary \ref{cor:main}. In fact, it is well-known that a shortest geodesic in a closed hyperbolic 3-manifold has a tubular neighborhood of radius at least $\ell/4$ where $\ell$ is the length of the geodesic (see \cite[Proposition 1.11]{GMT}). Combined with standard consequences of Margulis Lemma, this provides a universal constant $R_0>0$ such that a shortest geodesic in a closed hyperbolic 3-manifold $M$ always has a tubular neighborhood of radius $R\ge R_0$ (for much more refined estimates see \cite[Theorem 4.1]{GMT}). 

Let us mention the fact that the restriction to simple geodesics in closed hyperbolic 3-manifolds, rather than dealing with geodesic links in arbitrary finite volume ones, is not important. A straightforward generalization of Theorem \ref{thm:main} holds in this larger setting with exactly the same arguments and more robust bookkeeping. We chose to restrict ourselves to the simple case to streamline the exposition.

\subsection*{Filling}
Now we move to fillings. We give a new proof of the following lower bound due to Hodgson and Kerckhoff \cite{HK} using completely different techniques.

\begin{thmA}
\label{thm:main'}
There exists a constant $c>0$ such that the following holds. Let $M$ be a finite volume hyperbolic 3-manifold with one cusp. Let $C$ be a cusp neighborhood of $M$ bounded by the horospherical torus $\partial C$. Consider a simple closed geodesic $\mu\subset\partial C$ for the intrinsic flat metric on $\partial C$. If the length of $\mu$ is $\ell>\sqrt{c}\pi$ and the area of $\partial C$ is $A$ then the Dehn filling of $M-C$ with slope $\mu$, denoted by $M_\mu$, has a hyperbolic metric of volume
\[
{\rm vol}(M_\mu)+\frac{A}{2}\frac{\pi^2}{\ell^2}\left(1-c\frac{\pi^2}{\ell^2}\right)\le{\rm vol}(M).
\]
\end{thmA}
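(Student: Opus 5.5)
We will work in the direction opposite to Hodgson--Kerckhoff. Instead of deforming cone manifolds, we build an explicit negatively curved metric on $M_\mu$ by cutting off the cusp and gluing in a solid torus. We then compare volumes using a Schl\"afli-type or Besson--Courtois--Gallot-type inequality.

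\textbf{Step 1: the model metric.} Write the cusp $C$ in coordinates $(t,x)\in[0,\infty)\times\partial C$ with metric $dt^2+e^{-2t}g_{\partial C}$. Decompose $g_{\partial C}$ along $\mu$ and its orthogonal direction, so that $g_{\partial C}=dx^2+dy^2$ with $\mu$ of length $\ell$ in the $x$-direction and the transverse width equal to $A/\ell$. On the filled solid torus $V$ we put a warped product
\[
dr^2+f(r)^2dx^2+g(r)^2dy^2 .
\]
We choose $f,g$ so that near the core $f(r)\approx \frac{\ell}{2\pi}\sinh r$, which closes up smoothly since $x$ has period $\ell$ and so $\mu$ bounds a disk, and $g(r)\approx\cosh r$. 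For $r$ large we match exactly $f=g=c_0e^{r}$, the cusp metric read backwards in $t$. Concretely we take the hyperbolic metric on a tube around a geodesic, in which $\mu$ has length $2\pi\sinh r$ and the longitude has length $\sim\cosh r$. We interpolate to the horospherical warping over a collar of definite length, at a radius $r_0$ where $2\pi\sinh r_0\approx\ell e^{-t_0}$. The condition $\ell>\sqrt{c}\pi$ ensures $r_0$ is large, so the interpolation region sits where $\sinh\approx\cosh\approx e^r/2$. The curvature there is $-1+O(e^{-2r_0})=-1+O(\pi^2/\ell^2)$.

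\textbf{Step 2: volume of the model.} A direct integration compares the volume of the glued solid torus with the volume of the cusp piece it replaces. The hyperbolic tube contributes $\tfrac12\,{\rm area}(\partial)\tanh$-type terms, and the difference from the cusp is
\[
\frac{A}{2}\frac{\pi^2}{\ell^2}+O\!\left(A\frac{\pi^4}{\ell^4}\right).
\]
This is the classical computation that the tube has area $\pi\ell_\gamma\sinh 2r$ versus the horospherical area. So ${\rm vol}(M_\mu,g)\le{\rm vol}(M)-\frac{A}{2}\frac{\pi^2}{\ell^2}\bigl(1-c'\frac{\pi^2}{\ell^2}\bigr)$.

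\textbf{Step 3: comparison with the hyperbolic volume.} Here we need the hyperbolic volume of $M_\mu$ to be at most the volume of a metric with sectional curvature $\le -1$, or at least Ricci $\ge -2(1+\epsilon)$. The cleanest tool is the Besson--Courtois--Gallot inequality in Boland--Connell--Souto's form: for a metric $g$ on $M_\mu$ with ${\rm Ric}\ge-2k^2\,$... actually BCG gives ${\rm vol}_{hyp}\le \bigl(h(g)/2\bigr)^3{\rm vol}(g)$, where $h$ is the volume entropy. Since the curvature is pinched, $\sec\le -1+\delta$ with $\delta=O(\pi^2/\ell^2)$, we get $h(g)\le 2\sqrt{1+\delta}$ from Ricci $\ge -2(1+\delta)$ via Bishop comparison. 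The correction $(1+\delta)^{3/2}$ multiplies ${\rm vol}(g)$, however, and ${\rm vol}(M)$ is not small. This would reintroduce the multiplicative error the paper wants to avoid.

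\textbf{Main obstacle.} The difficulty is localizing the error to the filled region. We would make $g$ exactly hyperbolic outside $V$ and use a localized BCG or natural-map argument. The barycenter map $F$ from $(M_\mu,g)$ to the hyperbolic manifold has Jacobian $\le (h/2)^3$ pointwise, bounded by a local-curvature quantity. The plan is to show that the Jacobian is $\le 1+O(e^{-\text{dist to }V})\delta$, so the total excess is $O(\delta\cdot{\rm vol}(V\text{-neighborhood}))=O(A\pi^4/\ell^4)$. This is absorbed into the constant $c$. Alternatively, we would try a Schl\"afli or Ricci-flow argument restricted to the tube. We expect this localization to be the real work.
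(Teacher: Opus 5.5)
\textbf{Genuine gap: Step 3.} Your Steps 1--2 are essentially the paper's construction: a doubly warped metric on $V$, a hyperbolic tube near the core and the cusp near $\partial V$, with ${\rm vol}(V)\le\frac{A}{2}(1-\frac{\pi^2}{\ell^2})$. There are two normalization slips. With $x$ of period $\ell$ you need $f\approx\frac{2\pi}{\ell}\sinh r$ for the metric to close up, and then $g\approx\frac{2\pi}{\ell}\cosh r$ to match the cusp.

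The real problem is Step 3, which is where the whole theorem lives. You have no inequality relating ${\rm vol}_{\rm hyp}(M_\mu)$ to your model metric with an error supported near $V$. The BCG Jacobian bound is governed by the volume entropy, a global invariant, and ``localized BCG'' is only a hope.

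Localization would not rescue the argument anyway. First, your bookkeeping is off. $\delta$ is of order $e^{-2r_0}\asymp e^{2t_0}\pi^2/\ell^2$ on a collar of volume $\asymp Ae^{-2t_0}$, so $\delta\cdot{\rm vol}$ is of order $A\pi^2/\ell^2$ wherever the collar sits. That is the same order as the main term, not $O(A\pi^4/\ell^4)$.

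Second, this is structural. For any metric of your form, use ${\rm Ric}(e_y,e_y)=-(g''/g+f'g'/(fg))$ and ${\rm dvol}=fg\,{\rm d}r\,{\rm d}x\,{\rm d}y$. Since $fg'$ vanishes at the core and equals $1$ on $\partial V$, an integration by parts gives
\[
\int_V\left({\rm Ric}(e_y,e_y)+2\right){\rm dvol}=-A\,\big[fg'\big]_{\rm core}^{\partial V}+2\,{\rm vol}(V)=2\,{\rm vol}(V)-A=-2G,
\]
where $G={\rm vol}(C)-{\rm vol}(V)$ is exactly the volume gain you are after. Hence ${\rm Ric}\ge-2(1+\delta)$ forces $\int_V\delta\,{\rm dvol}\ge G$. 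Any bound of the form ${\rm vol}_{\rm hyp}(M_\mu)\le\int(1+\delta)^{3/2}{\rm dvol}$, however well localized, then yields at best ${\rm vol}(M)-G+\frac32 G$. It does not even recover ${\rm vol}(M_\mu)\le{\rm vol}(M)$.

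\textbf{Missing idea: compare through scalar curvature only.} The first-order deviations cancel in the scalar curvature. The paper does this with the sigma invariant:
\begin{enumerate}
\item By Perelman's work (Anderson, Kleiner--Lott), $\sigma(M_\mu)=-6\,{\rm vol}_{\rm hyp}(M_\mu)^{2/3}$.
\item For any metric $g_\mu$ on $M_\mu$, discarding the gradient term in the Yamabe quotient and applying H\"older gives $Y(M_\mu,[g_\mu])\ge-(\int|S_{g_\mu}|^{3/2}{\rm dvol})^{2/3}$.
\end{enumerate}
Together these give ${\rm vol}_{\rm hyp}(M_\mu)\le\int_{M_\mu}(|S_{g_\mu}|/6)^{3/2}{\rm dvol}$. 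This is additive, and it equals exactly ${\rm vol}(M)-\frac{A}{2}$ on the untouched part $M-C$, so no localization is needed.

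On $V$, write $f=e^\phi$, $g=e^\psi$. Then
\[
S=-2\left(3+3\left[(\phi+\psi)'-2\right]+(\phi+\psi)''+(\phi'-1)^2+(\psi'-1)^2+(\phi'-1)(\psi'-1)\right),
\]
so the linear part depends only on $\phi+\psi=\log(fg)$. Interpolate $\phi$ and $\psi$ between tube and cusp with a common cutoff $\chi$. Since $\sinh r\cosh r=\frac{e^{2r}}{4}(1-e^{-4r})$, $\phi+\psi$ differs from its cusp value by $(1-\chi)\log(1-e^{-4r})$, while $\phi'-1,\psi'-1=O(e^{-2r})$. Place the interpolation at bounded distance from $\partial V$, where $e^{-2r}\asymp\pi^2/\ell^2$. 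Then $|S+6|=O(\pi^4/\ell^4)$ and the total error is $O(A\pi^4/\ell^4)$.

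The $x$- and $y$-Ricci deviations are each of order $\pi^2/\ell^2$, but they cancel in the trace. This second-order control of the scalar curvature is what the proof needs, not the first-order sectional pinching of your Step 1.
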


We remark that our assumptions differ slightly from the ones of \cite{HK} as we ask that $\ell\ge\sqrt{c}\pi$ whereas \cite{HK} requires that $\ell^2/A$ is bigger than a universal constant. 

The problem of understanding how much the volume decreases under filling is better understood compared to the drilling case and has been studied with various methods. Thurston \cite[Chapter 6]{ThuNotes} showed that ${\rm vol}(M_\mu)<{\rm vol}(M)$ and that for every {\em fixed} $M$ we have ${\rm vol}(M_\mu)\to{\rm vol}(M)$ as $\ell\to\infty$. Neumann and Zagier \cite{NZ} then clarified, for every {\em fixed} $M$, the asymptotic behavior of ${\rm vol}(M)-{\rm vol}(M_\mu)$ as a function of $A/\ell^2$. Later, Hodgson and Kerckhoff \cite{HK}, gave a {\em universal} bound of the form ${\rm vol}(M)-{\rm vol}(M_\mu)= A\pi^2/2\ell^2+O(A^2/\ell^4)$.

The restriction to Dehn fillings of hyperbolic 3-manifolds of finite volume with one cusp rather than allowing multiple Dehn fillings of a selection of cusps of an arbitrary finite volume hyperbolic 3-manifold is not really important. A straightforward generalization of Theorem \ref{thm:main'} holds in that larger setting with the same proof but more notation. In order to keep the exposition as linear and short as possible, we restricted ourselves to the simple setup.

\subsection*{On the proofs}
Lastly, a few words on the proofs. 

To prove Theorem \ref{thm:main}, we follow the approach of \cite[Theorem 10.1]{AST} with a difference. We first change the metric of $M$ inside a tubular neighborhood $T$ of $\gamma$ interpolating between the metric on the tube and the metric on a suitable hyperbolic cusp. We do so by keeping under control the scalar curvature (see Lemma \ref{lem:functiondrill}). Then we compare the volume of the modified metric with that of the hyperbolic structure on $M-\gamma$. Instead of relying on Perelman's monotonicity formula, we exploit some other consequences of Perelman's work \cite{Pe1,Pe2}, namely, the exact computation of the sigma invariant of closed hyperbolic 3-manifolds as achieved by Anderson \cite{And} and Kleiner and Lott \cite{KL}. This allows us to keep track of the fact that we {\em did not change} the metric outside the tubular neighborhood $T$ and obtain an additive error rather than a multiplicative one in the volume increase. 

The proof of Theorem \ref{thm:main'} follows exactly the same strategy. We change the metric of $M$ in a cusp neighborhood $C$ as in Gromov-Thurston's $2\pi$-Theorem \cite[Theorem 9]{BH} (of which we prove a quantitative version, see Proposition \ref{pro:metric on tube}). This time the interpolation between the metric on the cusp and the metric on a tube is more delicate as we want to keep under control the scalar curvature up to higher order (see Lemma \ref{lem:functionfill}). Then we use the sigma invariant comparison to conclude.

\subsection*{Acknowledgements}
I warmly thank Ian Agol and Martin Bridgeman for useful discussions and generous feedback on a first draft of this article.

\section{Preliminaries}
This section reviews some crucial notions and tools that will be used in the proof of the main results. It is divided in two parts. The first part introduces the so-called sigma invariant of a closed 3-manifold and gives a useful integral estimate (Proposition \ref{pro:integral scalar}). In the second part we summarize the curvature properties of a special family of metrics that can be used to describe both hyperbolic tubes and hyperbolic cusps.

\subsection{The sigma invariant}

The sigma invariant of a 3-manifold is the key quantity involved in the proofs of Theorems \ref{thm:main} and \ref{thm:main'}.

\begin{dfn}[Sigma Invariant]
Let $M$ be a closed 3-manifold. For every Riemannian metric $(M,g)$ define the Einstein-Hilbert functional     
\[
\mc{E}(g):=\frac{\int_M{S_g\,{\rm dvol}_g}}{{\rm vol}(M,g)^{1/3}}
\]
where $S_g$ is the scalar curvature of $(M,g)$.

Denote by $[g]$ the conformal class of $(M,g)$ that is the set of Riemannian metrics on $M$ conformally equivalent to $g$. The classical Yamabe constant of $(M,[g])$ is given by
\[
Y(M,[g]):=\inf_{g'\in [g]}{\mc{E}(g')}.
\]

By considering the set $\mc{C}$ of all possible conformal classes of Riemannian metrics on $M$, one defines the sigma invariant as
\[
\sigma(M):=\sup_{[g]\in\mc{C}}{Y(M,[g])}.
\]
\end{dfn}

The following gives a useful lower bound.

\begin{pro}
\label{pro:integral scalar}
Let $M$ be a closed 3-manifold. For every Riemannian metric $(M,g)$ we have
\[
Y(M,[g])\ge -\left(\int_M{|S_g|^{3/2}{\rm dvol}_g}\right)^{2/3}.
\]    
\end{pro}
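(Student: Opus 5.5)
The plan is to write the Einstein-Hilbert functional of an arbitrary metric in $[g]$ in terms of a conformal factor, discard the nonnegative gradient term, and control the remaining scalar curvature term by H\"older's inequality.

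First, parametrize the conformal class. In dimension $3$, every metric in $[g]$ can be written as $g'=u^4g$ with $u>0$ smooth. The standard conformal transformation formula for scalar curvature in dimension $n=3$ is
\[
S_{g'}=u^{-5}\left(-8\Delta_g u+S_g u\right),
\]
with $\Delta_g$ the nonpositive Laplacian. The volume forms are related by ${\rm dvol}_{g'}=u^6\,{\rm dvol}_g$. Integrating and using integration by parts on the closed manifold $M$ gives
\[
\mc{E}(g')=\frac{\int_M\left(8|\nabla u|_g^2+S_g u^2\right){\rm dvol}_g}{\left(\int_M u^6\,{\rm dvol}_g\right)^{1/3}}.
\]

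Second, bound the numerator from below. We drop $8|\nabla u|^2\ge 0$ and use $S_g u^2\ge -|S_g|u^2$. Then we apply H\"older's inequality with conjugate exponents $3/2$ and $3$:
\[
\int_M |S_g|u^2\,{\rm dvol}_g\le\left(\int_M|S_g|^{3/2}{\rm dvol}_g\right)^{2/3}\left(\int_M u^6\,{\rm dvol}_g\right)^{1/3}.
\]
Dividing by $(\int_M u^6)^{1/3}$ yields
\[
\mc{E}(g')\ge-\left(\int_M|S_g|^{3/2}{\rm dvol}_g\right)^{2/3}.
\]
This bound holds for every $g'\in[g]$. Taking the infimum over the conformal class gives the stated lower bound on $Y(M,[g])$.

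There is no real obstacle here. The only point requiring care is the correct form of the conformal change formula, that is, the exponent $4/(n-2)=4$ and the constant $4(n-1)/(n-2)=8$. Even this is harmless, because the gradient term is discarded and the exponent $6$ is exactly the one making the H\"older exponents $3/2$ and $3$ match.
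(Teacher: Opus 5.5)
Your proposal is correct and follows the paper's proof essentially step by step: you write $g'=u^4g$, integrate by parts, discard the term $8|\nabla u|^2$, apply H\"older with exponents $3/2$ and $3$, and then take the infimum over the conformal class. Your remark on the sign convention of $\Delta_g$ and on the exponents $4$, $8$, $6$ is accurate.
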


\begin{proof}
Consider an arbitrary conformal change of the form $g'=u^4g$ where $u:M\to(0,\infty)$ is a smooth function. We have
\[
S_{g'}=u^{-5}(-8\Delta_gu+S_gu)\quad\text{\rm and }\quad{\rm dvol}_{g'}=u^6{\rm dvol}_g.
\]
Therefore
\begin{align*}
\frac{\int_M{S_{g'}{\rm dvol}_{g'}}}{{\rm vol}(M,g')^{1/3}} &=\frac{\int_M{-8u\Delta_gu+S_gu^2{\rm dvol}_{g}}}{\left(\int_M{u^6{\rm dvol}_{g}}\right)^{1/3}} &\text{\rm by formulas conformal change}\\
&=\frac{\int_M{8|\nabla_gu|^2+S_gu^2{\rm dvol}_{g}}}{\left(\int_M{u^6{\rm dvol}_{g}}\right)^{1/3}} &\text{\rm integration by parts}\\
&\ge\frac{\int_M{S_gu^2{\rm dvol}_{g}}}{\left(\int_M{u^6{\rm dvol}_{g}}\right)^{1/3}} &\text{\rm $|\nabla_gu|^2\ge 0$}.
\end{align*}
By Hölder's inequality
\[
\left|\int_M{S_gu^2{\rm dvol}_{g}}\right|\le \left(\int_M{|S_g|^{3/2}{\rm dvol}_{g}}\right)^{2/3}\left(\int_M{(u^2)^3{\rm dvol}_{g}}\right)^{1/3}.
\]
Thus we can continue the chain of inequalities with
\[
\ge\frac{-\left(\int_M{|S_g|^{3/2}{\rm dvol}_{g}}\right)^{2/3}\left(\int_M{(u^2)^3{\rm dvol}_{g}}\right)^{1/3}}{\left(\int_M{u^6{\rm dvol}_{g}}\right)^{1/3}}=-\left(\int_M{|S_g|^{3/2}{\rm dvol}_g}\right)^{2/3}.
\]
This concludes the proof of the proposition.
\end{proof}

In general, the sigma invariant is difficult to compute and we only have a few examples of manifolds for which we know the exact value. Luckily, hyperbolic 3-manifolds belong to this class as proved by Anderson \cite[Section 2]{And} and Kleiner and Lott \cite{KL} using Perelman's work \cite{Pe1,Pe2}.

\begin{thm}[{see \cite[Proposition 93.10]{KL}}]
\label{thm:yamabe}
Let $(M,g_M)$ be a closed hyperbolic 3-manifold. Then
\[
\sigma(M)=\mc{E}(M,g_M)=-6\cdot{\rm vol}(M,g_M)^{2/3}.
\]
\end{thm}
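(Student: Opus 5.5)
The statement has two parts of very different depth. The second equality, $\mc{E}(M,g_M)=-6\,{\rm vol}(M,g_M)^{2/3}$, is a direct computation: a hyperbolic metric has constant sectional curvature $-1$, so $S_{g_M}\equiv -6$, and hence
\[
\mc{E}(g_M)=\frac{-6\,{\rm vol}(M,g_M)}{{\rm vol}(M,g_M)^{1/3}}=-6\,{\rm vol}(M,g_M)^{2/3}.
\]
The content lies in the first equality $\sigma(M)=-6\,{\rm vol}(M,g_M)^{2/3}$, which I would prove as two inequalities.

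For $\sigma(M)\ge -6\,{\rm vol}(M,g_M)^{2/3}$ it suffices to show $Y(M,[g_M])=\mc{E}(g_M)$, i.e.\ that the hyperbolic metric minimizes $\mc{E}$ in its own conformal class. I would run the computation in the proof of Proposition \ref{pro:integral scalar} with $g=g_M$, where $S_g\equiv-6$. For $g'=u^4g_M$,
\[
\mc{E}(g')\ge\frac{-6\int_M u^2}{\left(\int_M u^6\right)^{1/3}}\ge -6\,{\rm vol}(M,g_M)^{2/3},
\]
where the last step is Hölder's inequality $\int u^2\le(\int u^6)^{1/3}{\rm vol}^{2/3}$. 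Equality holds for $u\equiv1$. This is essentially Proposition \ref{pro:integral scalar} in the case of constant $S_g$.

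The reverse inequality $\sigma(M)\le -6\,{\rm vol}(M,g_M)^{2/3}$ is the real obstacle, and I would not try to prove it from scratch. It requires showing that for every conformal class $[g]$ there is a metric $g'\in[g]$ with $\mc{E}(g')\le -6\,{\rm vol}(M,g_M)^{2/3}$.

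The standard route has three steps.

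1. Since $\sigma(M)\le 0$ for a hyperbolic manifold, reduce to showing that every metric $g$ satisfies
\[
{\rm vol}(M,g)\,\bigl(\min_M S_g/6\bigr)^{3/2}_{-}\ \ge\ {\rm vol}(M,g_M).
\]
Here the Yamabe minimizer $g'$ in $[g]$ has constant scalar curvature $S$, and $\mc{E}(g')=S\,{\rm vol}(g')^{2/3}$, so this inequality is exactly what is needed.

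2. Prove that inequality by Perelman's monotonicity: under Ricci flow with surgery, the quantity ${\rm vol}(g(t))\bigl(-\min S_{g(t)}\bigr)^{3/2}$ is nonincreasing, as Perelman showed in his treatment of the scale-invariant volume.

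3. Identify the long-time limit. The rescaled flow converges on its thick part to the hyperbolic piece, and by Mostow rigidity and closedness that piece is all of $(M,g_M)$. Hence the limiting value of the monotone quantity is $6^{3/2}{\rm vol}(M,g_M)$.

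Carrying this out requires the full Ricci flow with surgery machinery. In the paper's context I would simply cite Anderson and Kleiner–Lott, Proposition 93.10, for this inequality, and combine it with the conformal-class computation above.
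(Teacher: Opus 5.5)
Your proposal is correct and matches the paper, which gives no independent proof but cites Anderson and Kleiner--Lott \cite[Proposition 93.10]{KL}, resting on Perelman, for the hard inequality $\sigma(M)\le -6\,{\rm vol}(M,g_M)^{2/3}$; this is also the only direction used later in the proofs of Theorems \ref{thm:main} and \ref{thm:main'}. Your direct verification of the second equality and of $\sigma(M)\ge -6\,{\rm vol}(M,g_M)^{2/3}$ is Proposition \ref{pro:integral scalar} with $S_{g_M}\equiv -6$, and your reduction, via constant-scalar-curvature Yamabe minimizers, to the monotonicity of Perelman's quantity ${\rm vol}(g)(-\min S_g)^{3/2}$ is the standard argument behind that citation.
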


\subsection{Interpolation between tubes and cusps}

Our strategy to prove Theorems \ref{thm:main} and \ref{thm:main'} involves a smooth interpolation between the metric on a hyperbolic tube and the metric of a hyperbolic cusp. 

In order to do so, one is immediately led to study metrics of the form
\[
g:={\rm d}r^2+a(r)^2{\rm d}\theta^2+b(r)^2{\rm d}y^2.
\]
on $[R_0,R_1]\times\mb{S}^1\times[0,1]$ where $a,b:[R_0,R_1]\to(0,\infty)$ are smooth functions. In fact, when we choose  $R_0=0,R_1=R$ and $a(r)=2\pi\sinh(r),b(r)=\ell\cosh(r)$ we obtain the metric of a hyperbolic tube of radius $R$ and core geodesic of length $\ell$. When we choose $R_0=-\infty,R_1=0$ and $a(r)=\ell_1e^r,b(r)=\ell_2e^r$ we obtain the metric of a hyperbolic cusp based on a flat cylinder with core curve of length $\ell_1$ and height $\ell_2$.

In this section, we summarize the curvature properties of these well-studied metrics. Denote by $\partial_r,\partial_\theta,\partial_y$ the standard coordinate frame. 

\begin{lem}[{see \cite[Lemma 2.3]{A} or \cite[Lemma 10]{BH}}]
\label{lem:sectional}
Let $g:={\rm d}r^2+a(r)^2{\rm d}\theta^2+b(r)^2{\rm d}y^2$ be a metric on $[R_0,R_1]\times\mb{S}^1\times[0,1]$. We have the following symmetric table
\[
\begin{array}{c|c c c c}
    K(\partial_i,\partial_j) & &\partial_r &\partial_\theta &\partial_y\\
    \hline
    & & & &\\
    \partial_r & &0 &-\frac{a''}{a} &-\frac{b''}{b}\\
    & & & &\\
    \partial_\theta & & &0 &-\frac{a'b'}{ab}\\
    & & & &\\
    \partial_y & & & &0\\
\end{array}
\]
As $\partial_r,\partial_\theta,\partial_y$ is an orthogonal frame, the scalar curvature of the metric $g$ is given by
\[
S_g=2(K(\partial_r,\partial_\theta)+K(\partial_r,\partial_y)+K(\partial_\theta,\partial_y))=-2\left(\frac{a''}{a}+\frac{b''}{b}+\frac{a'b'}{ab}\right).
\]
\end{lem}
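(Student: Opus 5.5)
The plan is a direct computation using the fact that the frame $e_r=\partial_r$, $e_\theta=a^{-1}\partial_\theta$, $e_y=b^{-1}\partial_y$ is orthonormal and the metric is a doubly warped product. I will use the Koszul formula or, more efficiently, Cartan's structure equations. Take the coframe $\omega^1={\rm d}r$, $\omega^2=a\,{\rm d}\theta$, $\omega^3=b\,{\rm d}y$. Then
\[
{\rm d}\omega^1=0,\qquad {\rm d}\omega^2=\frac{a'}{a}\,\omega^1\wedge\omega^2,\qquad {\rm d}\omega^3=\frac{b'}{b}\,\omega^1\wedge\omega^3 .
\]
Solving ${\rm d}\omega^i=-\omega^i_{\ j}\wedge\omega^j$ with $\omega^i_{\ j}=-\omega^j_{\ i}$ gives
\[
\omega^2_{\ 1}=\frac{a'}{a}\omega^2=a'\,{\rm d}\theta,\qquad \omega^3_{\ 1}=\frac{b'}{b}\omega^3=b'\,{\rm d}y,\qquad \omega^3_{\ 2}=0 .
\]
The connection form $\omega^3_{\ 2}$ vanishes because no coefficient depends on $\theta$ or $y$.

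Next I compute the curvature forms $\Omega^i_{\ j}={\rm d}\omega^i_{\ j}+\omega^i_{\ k}\wedge\omega^k_{\ j}$.

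First, $\Omega^2_{\ 1}=a''\,{\rm d}r\wedge{\rm d}\theta=\frac{a''}{a}\,\omega^1\wedge\omega^2$. This gives $K(\partial_r,\partial_\theta)=-a''/a$, with the sign fixed by the convention $\Omega^i_{\ j}=K\,\omega^i\wedge\omega^j$ giving $K(e_i,e_j)=-K$ for this ordering. As a check, $a=\sinh r$ should yield curvature $-1$.

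Similarly, $\Omega^3_{\ 1}=\frac{b''}{b}\,\omega^1\wedge\omega^3$ gives $K(\partial_r,\partial_y)=-b''/b$.

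Finally, $\Omega^3_{\ 2}=0+\omega^3_{\ 1}\wedge\omega^1_{\ 2}=b'\,{\rm d}y\wedge(-a'\,{\rm d}\theta)=\frac{a'b'}{ab}\,\omega^2\wedge\omega^3$. This gives $K(\partial_\theta,\partial_y)=-a'b'/(ab)$.

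Diagonal entries are $0$ by convention, since the table records only sectional curvatures of coordinate planes, and the table is symmetric because $K(X,Y)=K(Y,X)$.

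The sign convention is the only delicate point, and the hyperbolic tube example settles it. With $a=2\pi\sinh r$ and $b=\ell\cosh r$ we have $a''/a=b''/b=a'b'/(ab)=1$, so every entry is $-1$, as it must be for a hyperbolic metric. The same holds for the cusp.

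For the scalar curvature I will use that the curvature operator is diagonal in this frame: the only nonzero components are $R_{ijij}$, since all $\Omega^i_{\ j}$ are multiples of $\omega^i\wedge\omega^j$. In any orthonormal frame $S=\sum_{i\ne j}K(e_i,e_j)=2\sum_{i<j}K(e_i,e_j)$, which gives the stated formula
\[
S_g=-2\left(\frac{a''}{a}+\frac{b''}{b}+\frac{a'b'}{ab}\right).
\]
There is no real obstacle beyond careful bookkeeping of signs in the structure equations.
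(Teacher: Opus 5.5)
Your computation is correct. The forms $\omega^2_{\ 1}=a'\,{\rm d}\theta$, $\omega^3_{\ 1}=b'\,{\rm d}y$, $\omega^3_{\ 2}=0$ satisfy the torsion-free structure equations. The curvature forms you derive give exactly the three off-diagonal entries of the table, and the hyperbolic tube check fixes the global sign correctly.

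The paper gives no proof of this lemma; it only cites Agol and Bleiler--Hodgson. Your moving-frame computation is therefore a self-contained substitute for that citation: the paper gains brevity, and you gain independence from the references.

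A few cosmetic points:
\begin{enumerate}
\item $\omega^3_{\ 2}=0$ is justified by uniqueness of the skew-symmetric solution of ${\rm d}\omega^i=-\omega^i_{\ j}\wedge\omega^j$, which your candidates satisfy. It does not follow merely from the coefficients being independent of $\theta$ and $y$.
\item Your sign sentence is garbled. With your conventions it reads more cleanly as $K(e_i,e_j)=\Omega^i_{\ j}(e_i,e_j)$; for example $K(e_2,e_1)=\frac{a''}{a}(\omega^1\wedge\omega^2)(e_2,e_1)=-\frac{a''}{a}$.
\item Diagonality of the curvature operator is not needed for $S_g$, since $S=\sum_{i\neq j}K(e_i,e_j)$ holds in every orthonormal frame. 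Diagonality only matters if you want to control sectional curvatures of non-coordinate planes, as in the $2\pi$-Theorem.
\end{enumerate}
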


In the next two sections, we produce suitable smooth transitions from tubes to cusps by carefully choosing $a,b$. We are going to use them for modifying the hyperbolic metrics of our initial manifold $M$ in a tubular neighborhood of the simple closed geodesic in the drillings case and in the cuspidal end of the manifold in the fillings case.  

\section{From tubes to cusps}

In this section we construct the functions $a_{\rm drill},b_{\rm drill}$ used in the drilling case.

\begin{lem}
\label{lem:functiondrill}
There exists a constant $c>0$ such that the following holds. For every $R>0$ there exist functions $a_{\rm drill},b_{\rm drill}:\mb{R}\to\mb{R}$ such that 
\begin{enumerate}
    \item{$a_{\rm drill},b_{\rm drill}>0$.}
    \item{$a_{\rm drill}(r)=\sinh(r),b_{\rm drill}(r)=\cosh(r)$ on $(2R/3,\infty)$.}
    \item{$a_{\rm drill}(r)=\sinh(R)e^r,\cosh(R)b_{\rm drill}(r)=e^r$ on $(-\infty,R/3)$.}
    \item{On $[R/3,2R/3]$ we have
    \[
    \left|\frac{a_{\rm drill}''}{a_{\rm drill}}+\frac{b_{\rm drill}''}{b_{\rm drill}}+\frac{a_{\rm drill}'b_{\rm drill}'}{a_{\rm drill}b_{\rm drill}}\right|\le c\frac{1}{R^2}.
    \]
    }
    \item{We have $a_{\rm drill}b_{\rm drill}\le \sinh(R)\cosh(R)e^{2r}$.}
\end{enumerate}
\end{lem}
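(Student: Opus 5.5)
The plan is to work with logarithmic derivatives, which turn the curvature expression into something controlled by first derivatives and one second derivative. Write $a=e^{\alpha}$ and $b=e^{\beta}$. Then
\[
\frac{a''}{a}+\frac{b''}{b}+\frac{a'b'}{ab}=\alpha''+\beta''+(\alpha')^2+(\beta')^2+\alpha'\beta'.
\]
In the tube region $r>2R/3$ we have $\alpha'=\coth r$ and $\beta'=\tanh r$. In the cusp region $r<R/3$ we have $\alpha'=\beta'=1$.

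The construction interpolates the derivatives and then integrates. Fix once and for all a smooth monotone cutoff $\psi:\mb{R}\to[0,1]$ with $\psi=0$ on $(-\infty,1/3]$ and $\psi=1$ on $[2/3,\infty)$, and set $\phi(r)=\psi(r/R)$. Then $|\phi'|\le C/R$ with $C$ universal. Define
\[
\alpha'(r)=(1-\phi)\cdot 1+\phi\coth r,\qquad \beta'(r)=(1-\phi)\cdot1+\phi\tanh r,
\]
and integrate backwards from the tube side. That is, set $\alpha(r)=\log\sinh r$ and $\beta(r)=\log\cosh r$ for $r\ge 2R/3$, and extend to smaller $r$ by integrating the derivatives above. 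This automatically gives (1) and (2). On $(-\infty,R/3)$ we get $\alpha=r+A_0$ and $\beta=r+B_0$, so $a=e^{A_0}e^r$ and $b=e^{B_0}e^r$, but the constants are not obviously the ones required in (3).

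To fix the constants, I would note that the statement only asks for $a=\sinh(R)e^r$ and $b=e^r/\cosh R$ on the cusp side. I suspect this normalization is meant so that $a$ and $b$ match the cusp $\sinh(R)e^{r}$ glued at radius $R$ after a reparametrization. Since adding constants to $\alpha$ and $\beta$ changes none of the curvature quantities, the fix is to scale. If the constants $e^{A_0},e^{B_0}$ differ from the required ones, I would use a different weight: replace $\phi$ by $\phi+\eta$ in $\alpha'$, where $\eta$ is a bump supported in $[R/3,2R/3]$ chosen to adjust $\int(\alpha'-1)$ to the needed value. I would do the same in $\beta'$. The needed adjustment is $\log(\sinh R)$ minus the actual integral, which is of size $O(1)$ for $R<1$ once one compares with $\int_{R/3}^{2R/3}\coth r\,dr\approx\log 2$. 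Then $\eta'$ is of size $O(1/R)$, and hence $\eta'$ contributes $O(1/R)$ to $\alpha''$.

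Next comes the curvature bound (4) on $[R/3,2R/3]$. The first derivatives satisfy $|\alpha'|\le \coth(R/3)+O(1)\lesssim 1/R$ and $|\beta'|\le 1$. The second derivatives involve $\phi'(\coth r-1)$, which is $O(1/R^2)$, together with $\phi\,{\rm csch}^2 r=O(1/R^2)$ and the bump term. So the sum is $O(1/R^2)$ for $R<1$; for larger $R$ the same estimate holds with a universal constant. This is the routine part.

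The main obstacle is (5), $ab\le\sinh R\cosh R\,e^{2r}$, which is equivalent to $\alpha+\beta-2r\le\log(\sinh R\cosh R)$. I would check this by monotonicity of $F=\alpha+\beta-2r$: we have $F'=\alpha'+\beta'-2=\phi(\coth r+\tanh r-2)\ge0$ (plus the bump correction). Hence $F$ is nondecreasing, and its supremum is its limit as $r\to\infty$, which is $\lim(\log(\sinh r\cosh r)-2r)=\log(1/4)$. That limit does not match $\log(\sinh R\cosh R)$. So the real care lies in choosing the normalizations, and the signs of the bump corrections, consistently with the required constants; the right comparison must use the constants fixed in (3). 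I would try first to arrange the corrections so that $F'\ge0$ throughout, and then verify (5) at the cusp end, where $F=\log(\sinh R/\cosh R)\le\log(\sinh R\cosh R)$, and across the interpolation region directly.
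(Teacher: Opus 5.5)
Your handling of (1)--(4) works for $R\in(0,1]$: you interpolate the logarithmic derivatives $\alpha'=(\log a)'$, $\beta'=(\log b)'$ and then correct the integration constants with bumps. Two corrections there. First, a bump $\eta$ with $\eta'=O(1/R)$ contributes $\eta'(\coth r-1)=O(1/R^2)$ to $\alpha''$, not $O(1/R)$; this is still acceptable. Second, (4) does \emph{not} persist for large $R$: at $r=R/3$ the expression equals $3$, so $R$ must be bounded. The paper's argument also uses $R\le 1$, which is the setting of Theorem~\ref{thm:main}. Note also that the intended normalization in (3) is $b_{\rm drill}=\cosh(R)e^r$. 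This is what the paper's construction produces and what the cusp metric $g_C$ uses.

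The genuine gap is (5): you leave it unproven, and the route you sketch cannot work. With the normalization of (3), $F=\alpha+\beta-2r$ equals $\log(\sinh R\cosh R)$ on $(-\infty,R/3]$. But $F(2R/3)=\log(\sinh(2R/3)\cosh(2R/3))-4R/3$ is strictly smaller, and the same happens under your literal reading $b=e^r/\cosh R$. So $F$ must decrease somewhere on $[R/3,2R/3]$, and arranging $F'\ge 0$ is impossible. Bumps chosen only to fix two integrals give no control on $\sup F$ over the interpolation region. Your computation $F\to\log(1/4)$ is correct. It shows that (5) necessarily fails for large $r$ when $\sinh(2R)<1/2$, because (2) forces $ab=\sinh r\cosh r$ there. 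So (5) can only be asserted on $(-\infty,R]$, which is the only range used in the proof of Theorem~\ref{thm:main}.

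The missing idea is to interpolate $\log a$ and $\log b$ themselves, not their derivatives. With your cutoff $\phi$, set
\[
\alpha=(1-\phi)\log(\sinh(R)e^r)+\phi\log\sinh r,\qquad \beta=(1-\phi)\log(\cosh(R)e^r)+\phi\log\cosh r .
\]
Then (3) holds exactly with no correction. Moreover $\alpha+\beta$ is a convex combination of $\log(\sinh R\cosh R\,e^{2r})$ and $\log(\sinh r\cosh r)$. So for $0\le r\le R$, property (5) reduces to $\sinh r\cosh r\le\sinh R\cosh R\le\sinh R\cosh R\,e^{2r}$. For (4), $\alpha'=1+\phi(\coth r-1)+\phi'\log\frac{\sinh r}{\sinh(R)e^r}$ is your $\alpha'$ plus one specific correction term, and similarly for $\beta'$. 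These logarithms of ratios are $O(1)$ on $[R/3,2R/3]$ when $R\le 1$. Hence $\alpha',\beta'=O(1/R)$ and $\alpha'',\beta''=O(1/R^2)$ follow directly.
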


\begin{proof}
Let $\eta:\mb{R}\to\mb{R}$ be a non-decreasing smooth bump function such that 
\begin{itemize}
    \item{$\eta=0$ on $(-\infty,1/3]$ and $\chi=1$ on $[2/3,\infty)$.}
    \item{$|\eta'|\le c_1$ and $|\eta''|\le c_2$.}
\end{itemize} 
Set $\chi(r):=\eta(r/R)$. Clearly 
\begin{itemize}
    \item{$\chi=0$ on $(-\infty,R/3]$ and $\chi=1$ on $[2R/3,\infty)$.}
    \item{$|\chi'|\le c_1/R$ and $|\chi''|\le c_2/R^2$.}
\end{itemize} 
 
Define 
\begin{align*}
\phi(r):=(1-\chi(r))\log(\sinh(R)e^r)+\chi(r)\log(\sinh(r))\\
\psi(r):=(1-\chi(r))\log(\cosh(R)e^r)+\chi(r)\log(\cosh(r)).
\end{align*}

Set $a_{\rm drill}(r)=e^{\phi(r)}$ and $b_{\rm drill}(r)=e^{\psi(r)}$. 

This automatically ensures that properties (1), (2), and (3) are satisfied.

Let us now discuss property (4). We have
\[
\frac{a_{\rm drill}'}{a_{\rm drill}}=\phi',\frac{a_{\rm drill}''}{a_{\rm drill}}=(\phi')^2+\phi''\quad\text{\rm and }\quad \frac{b'}{b_{\rm drill}}=\psi',\frac{b_{\rm drill}''}{b_{\rm drill}}=(\psi')^2+\psi''.
\] 

We now compute the first derivatives
\begin{align*}
\phi'=\chi'(r)\log\left(\frac{\sinh(r)}{\sinh(R)e^r}\right)+\chi(r)(\coth(r)-1)+1,\\
\psi'=\chi'(r)\log\left(\frac{\cosh(r)}{\cosh(R)e^r}\right)+\chi(r)(\tanh(r)-1)+1.
\end{align*}

The terms with $\chi'$ coefficient have the following behavior. As $\sinh(x)/e^x$ is increasing, $\cosh(x)/e^x$ is decreasing, and $r\in[R/3,2R/3]$, we have
\[
\frac{\sinh(r)}{\sinh(R)e^r}\in\left(\frac{\sinh(R/3)}{\sinh(R)e^{R/3}},1\right)\quad\text{\rm and}\quad\frac{\cosh(r)}{\cosh(R)e^r}\in\left(\frac{\cosh(2R/3)}{\cosh(R)e^{2R/3}},1\right).
\]

On $[0,1]$, the functions $\frac{\sinh(y/3)}{\sinh(y)e^{y/3}},\frac{\cosh(2y/3)}{\cosh(y)e^{2y/3}}$ are decreasing, so they are bounded from below by 
\[
c_3=\frac{\sinh(1/3)}{\sinh(1)e^{1/3}},c_4=\frac{\cosh(2/3)}{\cosh(1)e^{2/3}}\in (0,1).
\]
respectively. Thus,
\[
\left|\log\left(\frac{\sinh(r)}{\sinh(R)e^r}\right)\right|\le|\log(c_3)|\quad\text{\rm and}\quad\left|\log\left(\frac{\cosh(r)}{\cosh(R)e^r}\right)\right|\le|\log(c_4)|.
\]

Consider then the terms with $\chi$ coefficient. On $[0,1]$ we have
\[
\coth(r)-1=\frac{2}{1-e^{-2r}}\in\left(0,\frac{1}{r}\right)\quad\text{\rm and}\quad\tanh(r)-1=-\frac{2}{1+e^{-2r}}\in\left(-2,-1\right).
\]
Therefore, for $r\in[R/3,2R/3]$, we get
\[
\left|\coth(r)-1\right|\le\frac{3}{R}\quad\text{\rm and}\quad\left|\tanh(r)-1\right|\le 2.
\]

Using the fact $|\chi'|\le c_1/R$ we get
\begin{align*}
|\phi'(r)| &\le\frac{c_1}{R}|\log(c_3)|+\frac{2}{R}+1=\frac{1}{R}\left(c_1|\log(c_3)|+2+R\right)\le\frac{c_5}{R},\\
|\psi'(r)| &\le\frac{c_1}{R}|\log(c_4)|+2+1=\frac{1}{R}\left(c_1|\log(c_4)|+3R\right)\le\frac{c_6}{R}
\end{align*}
where $c_5=c_1|\log(c_3)|+3$ and $c_6=c_1|\log(c_4)|+3$. 

As for the second derivatives, we have
\begin{align*}
\phi''=\chi''(r)\log\left(\frac{\sinh(r)}{\sinh(R)e^r}\right)+2\chi'(r)(\coth(r)-1)-\chi(r)\frac{1}{\sinh(r)^2}\\
\psi''=\chi''(r)\log\left(\frac{\cosh(r)}{\cosh(R)e^r}\right)+2\chi'(r)(\tanh(r)-1)+\chi(r)\frac{1}{\cosh(r)^2}.
\end{align*}

Using the fact $|\chi'|\le c_1/R,|\chi''|\le c_2/R^2$ and that
\[
\frac{1}{\sinh(r)^2}\le\frac{1}{r^2}\quad\text{\rm and}\quad\frac{1}{\cosh(r)^2}\le 1,
\]
we get
\begin{align*}
|\phi''(r)| &\le\frac{c_2}{R^2}|\log(c_3)|+2\frac{c_1}{R^2}+\frac{9}{R^2}=\frac{c_7}{R^2},\\
|\psi''(r)| &\le\frac{c_2}{R^2}|\log(c_4)|+4\frac{c_1}{R}+1=\frac{1}{R^2}\left(c_2|\log(c_4)|+4c_1R+R^2\right)\le\frac{c_8}{R^2}
\end{align*}
where $c_7=c_2|\log(c_3)|+2c_1+9$ and $c_8=c_2|\log(c_4)|+4c_1+1$.

In conclusion
\[
\frac{a_{\rm drill}''}{a_{\rm drill}}+\frac{b_{\rm drill}''}{b_{\rm drill}}+\frac{a_{\rm drill}'b_{\rm drill}'}{a_{\rm drill}b_{\rm drill}}=(\phi')^2+(\psi')^2+\phi'\psi'+\phi''+\psi''\le\frac{c}{R^2}
\]
where $c=c_5^2+c_6^2+c_5c_6+c_7+c_8$.

We need to check (5).
\begin{align*}
\phi(r)+\psi(r) &=(1-\chi(r))\log(e^{2r})+\chi(r)\log\left(\sinh(r)\cosh(r)\right) &\\
 &=\log(\sinh(R)\cosh(R)e^{2r})+\chi(r)\log\left(\frac{\sinh(r)\cosh(r)}{e^{2r}}\right) &\\
 &\le\log(\sinh(R)\cosh(R)e^{2r})
\end{align*}
as $\sinh(r)\cosh(r)/e^{2r}<1$. This concludes the proof of the lemma.
\end{proof}


\section{From cusps to tubes}

In this section we construct the functions $a_{\rm fill},b_{\rm fill}$ used in the filling case.

\begin{lem}
\label{lem:functionfill}
There exists a constant $c>0$ such that the following holds. For every $\ell_1>2\pi,\ell_2>0$ there exist functions $a_{\rm fill},b_{\rm fill}:\mb{R}\to\mb{R}$ that satisfy the following properties. Let $R=\log(\ell_1/\pi)$ and $\kappa=2\pi\ell_2/\ell_1$.
\begin{enumerate}
    \item{$a_{\rm fill},b_{\rm fill}>0$.}
    \item{$a_{\rm fill}(r)=2\pi\sinh(r+R),b(r+R)=\kappa\cosh(r+R)$ on $(-R,-2\delta)$.}
    \item{$a_{\rm fill}(r)=\ell_1e^r,b(r)=\ell_2 e^r$ on $(-\delta,0)$.}
    \item{We have
    \[
    \left|\frac{a_{\rm fill}''}{a_{\rm fill}}+\frac{b_{\rm fill}''}{b_{\rm fill}}+\frac{a_{\rm fill}'b_{\rm fill}'}{a_{\rm fill}b_{\rm fill}}-3\right|\le c\frac{\pi^4}{\ell_1^4}.
    \]
    }
    \item{We have 
    \[
    a_{\rm fill}b_{\rm fill}\le\ell_1\ell_2e^{2r}.
    \]
    }
\end{enumerate}
\end{lem}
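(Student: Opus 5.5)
The plan is to mimic the proof of Lemma \ref{lem:functiondrill}: interpolate at the level of logarithms between the tube and the cusp profiles. The new point is that the two profiles agree to first order near $r=0$, and we must exploit a cancellation to get an error of order $\epsilon^2$, where $\epsilon:=\pi^2/\ell_1^2$. Fix once and for all a universal $\delta>0$, say $\delta=1/10$. Since $\ell_1>2\pi$ we have $R=\log(\ell_1/\pi)>\log 2>2\delta$, so the region $(-R,-2\delta)$ is nonempty. Take a non-decreasing smooth bump $\chi$ with $\chi=1$ on $(-\infty,-2\delta]$ and $\chi=0$ on $[-\delta,\infty)$. Its derivatives $|\chi'|,|\chi''|$ are then bounded by universal constants.

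First I rewrite the tube profile in cusp coordinates. Using $e^R=\ell_1/\pi$ and $\kappa=2\pi\ell_2/\ell_1$, one computes
\[
2\pi\sinh(r+R)=\ell_1e^r\left(1-\epsilon e^{-2r}\right),\qquad \kappa\cosh(r+R)=\ell_2e^r\left(1+\epsilon e^{-2r}\right).
\]
Set $u(r):=\log(1-\epsilon e^{-2r})$ and $v(r):=\log(1+\epsilon e^{-2r})$. These are well defined where $\epsilon e^{-2r}<1$; on $[-2\delta,0]$ this holds because $\epsilon<1/4$ and $e^{4\delta}<4$. Define
\[
\phi:=r+\log\ell_1+\chi u,\qquad \psi:=r+\log\ell_2+\chi v,
\]
and set $a_{\rm fill}=e^\phi$, $b_{\rm fill}=e^\psi$. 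On $(-R,-2\delta)$ these are exactly the tube functions (there $u$ is defined since $\sinh(r+R)>0$). On $(-\delta,0)$ they are exactly the cusp functions. So properties (1), (2), (3) hold by construction.

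For (5), note that $\phi+\psi=2r+\log(\ell_1\ell_2)+\chi(u+v)$ and that
\[
u+v=\log\left(1-\epsilon^2e^{-4r}\right)\le 0.
\]
Since $\chi\ge0$, this gives $a_{\rm fill}b_{\rm fill}\le\ell_1\ell_2e^{2r}$.

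The main step, and the expected obstacle, is (4). The naive bound, as in Lemma \ref{lem:functiondrill}, only gives an error $O(\epsilon)$ because $u,v=O(\epsilon)$, whereas we need $O(\epsilon^2)$. Write $\phi'=1+p$ and $\psi'=1+q$ with $p=(\chi u)'$ and $q=(\chi v)'$. As in the drilling case the quantity in (4) equals $(\phi')^2+(\psi')^2+\phi'\psi'+\phi''+\psi''$, so
\[
\frac{a_{\rm fill}''}{a_{\rm fill}}+\frac{b_{\rm fill}''}{b_{\rm fill}}+\frac{a_{\rm fill}'b_{\rm fill}'}{a_{\rm fill}b_{\rm fill}}-3=3(p+q)+(p+q)'+p^2+q^2+pq.
\]
We only need to estimate this on $[-2\delta,-\delta]$, since elsewhere the metric is exactly hyperbolic and the expression equals $3$ identically. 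The linear terms involve only $p+q=(\chi(u+v))'$ and its derivative. Because $u+v=\log(1-\epsilon^2e^{-4r})$, the function $u+v$ and its first two derivatives are $O(\epsilon^2)$ on $[-2\delta,0]$, uniformly. Combined with the universal bounds on $\chi,\chi',\chi''$, this makes the linear terms $O(\epsilon^2)$. The quadratic terms are $O(\epsilon^2)$ because $u,v,u',v'$ are each $O(\epsilon)$ there; this uses $\epsilon e^{-2r}\le 1/2$, which keeps the logarithms uniformly controlled. Altogether the error is at most $c\,\epsilon^2=c\,\pi^4/\ell_1^4$ with $c$ universal, which is (4).
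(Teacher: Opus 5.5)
Your proposal is correct and follows the paper's proof essentially step for step. It uses the same logarithmic interpolation (your $\chi$ is the paper's $1-\chi$, with $\delta=1/10$ in place of $\log(2)/4$), the same rewriting $2\pi\sinh(r+R)=\ell_1e^r(1-\epsilon e^{-2r})$, the same key cancellation $u+v=\log(1-\epsilon^2e^{-4r})$ to make the linear terms $O(\epsilon^2)$, and the same observation that the quadratic terms are products of $O(\epsilon)$ quantities. One small slip: a bump equal to $1$ on the left and $0$ on the right is non-increasing, not non-decreasing; only $0\le\chi\le 1$ is actually used.
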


\begin{proof}
Let $\chi:\mb{R}\to\mb{R}$ be a smooth non-decreasing function such that $\chi(r)=0$ on $(-\infty,-\log(2)/2)$ and $\chi(r)=1$ on $(-\log(2)/4,\infty)$ whose first and second derivatives are bounded by $|\chi'|\le c_1$ and $|\chi''|\le c_2$. Define 
\begin{align*}
\phi(r):=(1-\chi(r))\log(2\pi\sinh(r+R))+\chi(r)\log(\ell_1e^r)\\
\psi(r):=(1-\chi(r))\log(\kappa\cosh(r+R))+\chi(r)\log(\ell_2e^r).
\end{align*}

Set $a_{\rm fill}(r)=e^{\phi(r)}$ and $b_{\rm fill}(r)=e^{\psi(r)}$.

This automatically ensures that properties (1), (2), and (3) are satisfied. 

Let us now discuss property (4). We have
\[
\frac{a_{\rm fill}'}{a_{\rm fill}}=\phi',\frac{a_{\rm fill}''}{a_{\rm fill}}=(\phi')^2+\phi''\quad\text{\rm and }\quad \frac{b_{\rm fill}'}{b_{\rm fill}}=\psi',\frac{b_{\rm fill}''}{b_{\rm fill}}=(\psi')^2+\psi''.
\] 
So we need to control three terms. The first is $(\phi')^2+(\psi')^2$ which we conveniently write as
\begin{align*}
(\phi')^2+(\psi')^2 &=(1+(\phi'-1))^2+(1+(\psi'-1)^2\\
 &=2+2[(\phi'-1)+(\psi'-1)]+(\phi'-1)^2+(\psi'-1)^2.
\end{align*}
The second is $\phi'\psi'$, which we write as
\begin{align*}
\phi'\psi' &=(1+(\phi'-1))(1+(\psi'-1))\\
 &=1+[(\phi'-1)+(\psi'-1)]+(\phi'-1)(\psi'-1).
\end{align*}
The third is $\phi''+\psi''$. Let us begin by computing the first derivative. 
\begin{align*}
\phi'(r)-1 &=-\chi'(r)\log\left(\frac{2\pi}{\ell_1}\frac{\sinh(r+R)}{e^r}\right)+(1-\chi(r))(\coth(r+R)-1)\\
\psi'(r)-1 &=-\chi'(r)\log\left(\frac{\kappa}{\ell_2}\frac{\cosh(r+R)}{e^r}\right)+(1-\chi(r))(\tanh(r+R)-1).
\end{align*}

The terms with $\chi'(r)$ coefficient can be simplified using the definition of $R=\log(\ell_1/\pi)$ and $\kappa=2\pi\ell_2/\ell_1$ as follows. 
\[
\frac{2\pi}{\ell_1}\frac{\sinh(r+R)}{e^r}=\frac{\pi e^R}{\ell_1}\left(1-e^{-2(r+R)}\right)=1-e^{-2(r+R)}
\]
and
\[
\frac{\kappa}{\ell_2}\frac{\cosh(r+R)}{e^r}=\frac{\kappa e^R}{2\ell_2}\left(1+e^{-2(r+R)}\right)=1+e^{-2(r+R)}.
\]

The terms with the $1-\chi(r)$ coefficient can be better understood by writing 
\[
\coth(r+R)-1=\frac{2e^{-2(r+R)}}{1-e^{-2(r+R)}}
\]
and 
\[
\tanh(r+R)-1=-\frac{2e^{-2(r+R)}}{1+e^{-2(r+R)}}.
\]

Putting together the above, we get
\begin{align*}
\phi'(r)-1 &=-\chi'(r)\log\left(1-e^{-2(r+R)}\right)+(1-\chi(r))\frac{2e^{-2(r+R)}}{1-e^{-2(r+R)}},\\
\psi'(r)-1 &=-\chi'(r)\log\left(1+e^{-2(r+R)}\right)-(1-\chi(r))\frac{2e^{-2(r+R)}}{1+e^{-2(r+R)}}.
\end{align*}

Recall that $r\in[-\log(2)/2,-\log(2)/4]$ and observe that 
\[
e^{-2(r+R)}=\frac{\pi^2}{\ell^2}e^{-2r}<\frac{1}{4}e^{-2r}<\frac{1}{2}.
\]
As $|\log(1\pm x)|\le x$ for $x\in(0,1)$ and $1/(1\pm x)\le 3/2\mp1/2$ for $x\in(0,1/2)$, we have $|\phi'(r)-1|\le c_3e^{-2R},|\psi'(r)-1|\le c_4e^{-2R}$ where $c_3=2(c_1+4),c_4=2(c_1+2)$ only depends on $\chi$. 

By the above formulas, we get
\[
(\phi'(r)-1)+(\psi'(r)-1)=-\chi'(r)\log\left(1-e^{-4(r+R)}\right)+(1-\chi(r))\frac{4e^{-4(r+R)}}{1-e^{-4(r+R)}}.
\]
Using the same controls on $\log(1\pm x)$ and $1/(1\pm x)$ as above (note that $e^{-4(r+R)}<e^{-2(r+R)}$) we deduce that $|(\phi'(r)-1)+(\psi'(r)-1)|\le c_5e^{-4R}$ where $c_5=4(c_1+8)$ only depends on $\chi$.

As for the second derivatives, we have
\begin{align*}
\phi''(r) &=-\chi''(r)\log\left(\frac{2\pi}{\ell_1}\frac{\sinh(r+R)}{e^r}\right)-2\chi'(r)(\coth(r+R)-1)-(1-\chi(r))\frac{1}{\sinh(r+R)^2},\\
\psi''(r) &=-\chi''(r)\log\left(\frac{2\pi}{\ell_1}\frac{\cosh(r+R)}{e^r}\right)-2\chi'(r)(\tanh(r+R)-1)+(1-\chi(r))\frac{1}{\cosh(r+R)^2}.
\end{align*}
Simplifying the expressions as above, we obtain
\begin{align*}
\phi''(r) &=-\chi''(r)\log\left(1-e^{-2(r+R)}\right)-2\chi'(r)\frac{2e^{-2(r+R)}}{1-e^{-2(r+R)}}-(1-\chi(r))\left(\frac{2e^{-(r+R)}}{1-e^{-2(r+R)}}\right)^2,\\
\psi''(r) &=-\chi''(r)\log\left(1+e^{-2(r+R)}\right)+2\chi'(r)\frac{2e^{-2(r+R)}}{1+e^{-2(r+R)}}+(1-\chi(r))\left(\frac{2e^{-(r+R)}}{1+e^{-2(r+R)}}\right)^2.
\end{align*}
Summing the two terms, we get
\[
\phi''(r)+\psi''(r)=-\chi''(r)\log\left(1-e^{-4(r+R)}\right)-2\chi'(r)\frac{4e^{-4(r+R)}}{1-e^{-4(r+R)}}-(1-\chi(r))\frac{16e^{-4(r+R)}}{(1-e^{-4(r+R)})^2}.
\]
Again, using $|\log(1\pm x)|\le x$ and $1/(1\pm x)\le 2$ we deduce that there is a constant $c_6=4(c_2+16c_1+32)$ such that $|\phi''(r)+\psi''(r)|\le c_6e^{-4R}$. Putting all the terms together, we conclude the following inequality
\begin{align*}
&\left|(\phi')^2+(\psi')^2+\phi'\psi'+\phi''+\psi''-3\right|\\
&=\left|2[(\phi'-1)+(\psi'-1)]+(\phi'-1)^2+(\psi'-1)^2+[(\phi'-1)+(\psi'-1)]+(\phi'-1)(\psi'-1)+\phi''+\psi''\right|\\
&\le 3|(\phi'-1)+(\psi'-1)|+|\phi'-1||\psi'-1|+|\phi'-1|^2+|\psi'-1|^2+|\phi''+\psi''|\\
&\le (3c_5+c_3c_4+c_3^2+c_4^2+c_6)e^{-4R}=c\pi^4/\ell^4
\end{align*}
where $c=3c_5+c_3c_4+c_3^2+c_4^2+c_6$ only depends on $\chi$.

Lastly, we need to check (5). We have
\begin{align*}
\phi(r)+\psi(r) &=(1-\chi(r))\log\left(2\pi\kappa\sinh(r+R)\cosh(r+R)\right)+\chi(r)\log(\ell_1\ell_2e^{2r}) &\\
 &=(1-\chi(r))\log\left(\frac{2\pi\kappa e^{2R}}{4}e^{2r}(1-e^{-4(r+R)})\right)+\chi(r)\log(\ell_1\ell_2e^{2r}). &
\end{align*}
Using $\frac{2\pi\kappa e^{2R}}{4}=\ell_1\ell_2$ we continue the chain of equalities as
\begin{align*} 
 &=(1-\chi(r))\log\left(\ell_1\ell_2e^{2r}(1-e^{-4(r+R)})\right)+\chi(r)\log(\ell_1\ell_2e^{2r}) &\\
 &=\log(\ell_1\ell_2e^{2r})+(1-\chi(r))\log\left(1-e^{-4(r+R)}\right)\le \log(\ell_1\ell_2e^{2r}). &
\end{align*}
This concludes the proof of the lemma.
\end{proof}


\section{Metrics on tubes}

Having defined the functions $a_{\rm fill},b_{\rm fill}$, we use them to prove the following extension result, which might be of independent interest (it is a quantitative version of the $2\pi$-Theorem, compare with \cite[Theorem 9]{BH}, \cite[Proposition 3.1]{CL}, and \cite[Theorem 2.1]{FKP}).

\begin{pro}
\label{pro:metric on tube}
There exists a constant $c>0$ such that the following holds. Let $V$ be a solid torus. Suppose that the boundary $\partial V$ is equipped with a flat metric of area $A$. Assume that the length of the flat geodesic representative on $\partial V$ of the meridian $\mu\subset\partial V$ is $\ell>2\pi$. Then there exists a smooth Riemannian metric $(V,g)$ with the following properties.
\begin{itemize}
    \item{The metric $g$ is a hyperbolic cusp metric in a collar of $\partial V$, the boundary $\partial V$ is a horospherical section of the cusp, and the restriction of $g$ to the boundary agrees with the prescribed flat metric on $\partial V$.}
    \item{Denote by $S_g$ the scalar curvature of the metric $g$. We have
    \[
    \int_V{\left(\frac{|S_g|}{6}\right)^{3/2}{\rm dvol}_g}\le\frac{A}{2}\left(1-\frac{\pi^2}{\ell^2}\right)\left(1+c\frac{\pi^4}{\ell^4}\right).
    \]
    }
\end{itemize}    
\end{pro}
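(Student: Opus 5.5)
Use the metric $g={\rm d}r^2+a_{\rm fill}(r)^2{\rm d}\theta^2+b_{\rm fill}(r)^2{\rm d}y^2$ from Lemma~\ref{lem:functionfill}, on $(-R,0]\times\mb{S}^1\times[0,1]$ with $R=\log(\ell/\pi)$, where $\theta$ runs along the meridian and $y$ along a transverse direction. First set up coordinates on $\partial V$. Write the flat torus $\partial V$ as a quotient of a flat cylinder whose core is the geodesic meridian $\mu$ of length $\ell_1:=\ell$ and whose height is $\ell_2:=A/\ell$. The gluing of the top and bottom circles is by a twist $\tau$, so the flat metric in coordinates $(\theta,y)\in\mb{S}^1\times[0,1]$ has the form $\ell_1^2{\rm d}\theta^2+\ell_2^2{\rm d}y^2$, with the identification $(\theta,1)\sim(\theta+\tau,0)$. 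Since the metric $g$ is invariant under rotations in $\theta$, this twisted identification is compatible with it at every level $r$, and the quotient is a smooth metric on $V$ minus the core.

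The key checks are the following.
\begin{enumerate}
\item Near $r=0$, on $(-\delta,0]$, property (3) gives the cusp metric ${\rm d}r^2+e^{2r}(\ell_1^2{\rm d}\theta^2+\ell_2^2{\rm d}y^2)$. This is a hyperbolic cusp with $\partial V$ as a horospherical section carrying the prescribed flat metric.
\item Near $r=-R$, property (2) gives, after the shift $s=r+R$, the metric ${\rm d}s^2+(2\pi\sinh s)^2{\rm d}\theta^2+(\kappa\cosh s)^2{\rm d}y^2$. This is a hyperbolic tube around a core geodesic of length $\kappa$ (with the twist), and it extends smoothly across $s=0$ to a solid torus. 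The meridian $\theta$-circle has cone angle exactly $2\pi$ there, because $a=2\pi\sinh s$.
\end{enumerate}
The condition $\ell>2\pi$ is what makes $R>\log 2$, so the transition window $[-\log 2/2,-\log2/4]$ of Lemma~\ref{lem:functionfill} lies inside $(-R,0)$. The constant $\delta$ in the lemma should be read accordingly.

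Next, bound the integral. By Lemma~\ref{lem:sectional} and property (4), $S_g=-6+E$ with $|E|\le 2c\pi^4/\ell^4$. Hence
\[
\left(\frac{|S_g|}{6}\right)^{3/2}\le\left(1+\frac{c\pi^4}{3\ell^4}\right)^{3/2}\le 1+c'\frac{\pi^4}{\ell^4}
\]
for $\ell>2\pi$, since $\pi^4/\ell^4$ is then bounded. The volume form is $a_{\rm fill}b_{\rm fill}\,{\rm d}r\,{\rm d}\theta\,{\rm d}y$. On the tube part it is exactly $2\pi\kappa\sinh s\cosh s=\ell_1\ell_2 e^{2r}(1-e^{-4(r+R)})$, and the proof of property (5) shows that this same identity, with the factor $(1-\chi)$ on the logarithmic correction, gives
\[
a_{\rm fill}b_{\rm fill}\le\ell_1\ell_2e^{2r}\left(1-e^{-4(r+R)}\right)^{1-\chi}.
\]

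For the main estimate, integrate the exact tube expression. Over $r\in(-R,0]$,
\[
\int_{-R}^0\ell_1\ell_2\left(e^{2r}-e^{-4R}e^{-2r}\right){\rm d}r=\frac{A}{2}\left(1-e^{-2R}\right)^2=\frac{A}{2}\left(1-\frac{\pi^2}{\ell^2}\right)^2.
\]
This is at most the target $\frac A2(1-\pi^2/\ell^2)$ as long as the factor $(1-e^{-4(r+R)})$ is present. On the transition region and the cusp region, where $\chi>0$, the bound $a_{\rm fill}b_{\rm fill}\le\ell_1\ell_2 e^{2r}$ loses exactly the term $\ell_1\ell_2 e^{-4R}\int e^{-2r}\,{\rm d}r$ over a region $r\ge-\log2/2$. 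That loss is $O(A\pi^4/\ell^4)$, and it can be absorbed into the factor $(1+c\pi^4/\ell^4)$ after enlarging $c$, using $1-\pi^2/\ell^2\ge 3/4$.

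Multiplying the volume bound by $(1+c'\pi^4/\ell^4)$ gives the stated inequality. The main obstacle is this bookkeeping: showing that the error terms are genuinely of order $\pi^4/\ell^4$, relative to the leading term $\frac A2(1-\pi^2/\ell^2)$, rather than of order $\pi^2/\ell^2$. That is exactly why property (4) had to be proved up to fourth order.
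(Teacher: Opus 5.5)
Your proposal is correct and follows the paper's proof. You use the same metric ${\rm d}r^2+a_{\rm fill}^2{\rm d}\theta^2+b_{\rm fill}^2{\rm d}y^2$ on the solid torus obtained by collapsing the meridian circles at $r=-R=-\log(\ell/\pi)$, the same pointwise bound $(|S_g|/6)^{3/2}\le 1+c'\pi^4/\ell^4$ from property (4), and a volume estimate; your finer volume bookkeeping is valid but unnecessary, since property (5), $a_{\rm fill}b_{\rm fill}\le\ell_1\ell_2e^{2r}$ on all of $[-R,0]$, already gives ${\rm vol}(V,g)\le\frac{A}{2}(1-\pi^2/\ell^2)$ in one line, exactly as in the paper (the fourth-order control is needed only for the scalar curvature factor).
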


\begin{proof}
Let $R:=\log(\ell/\pi)$.

Consider the normal covering $W\to\partial V$ corresponding to ${\rm ker}(\pi_1(\partial V)\to\pi_1(V))=\mb{Z}\mu$ (the loop $\mu\subset\partial V$ is a flat representative of the meridian). By covering theory, the meridian $\mu$ lifts to a simple closed curve on $W$. We denote by $\delta:W\to W$ a generator of the deck group $\pi_1(\partial V)/\mb{Z}\mu$, so that $\partial V=W/\langle\delta\rangle$. 

Equipped with the induced flat metric, $W$ is a flat cylinder which we can isometrically identify with 
\[
(\mb{S}^1\times\mb{R},\ell_1^2{\rm d}\theta^2+{\rm d}y^2)
\]
where $\mb{S}^1=\mb{R}/\mb{Z}$. Since $\mu$ lifts homeomorphically to a simple closed geodesic on $W$ of length $\ell$, we must have $\ell_1=\ell$. Under the identification $W\simeq\mb{S}^1\times\mb{R}$ the deck transformation $\delta$ translates into the isometry 
\[
\tau:\left(\theta,y\right)\to\left(\theta+\alpha,y+s\right)
\]
and we have $\partial V=W/\langle\delta\rangle=\mb{S}^1\times\mb{R}/\langle\tau\rangle$. In order to compute $s$ we observe the following. Every fundamental domain of $\tau$ has area equal to $A$, the area of $\partial V$. By the explicit formula for $\tau$, a fundamental domain is given by $\mb{S}^1\times[0,s]$ which has area $\ell_1s$. Thus $s=A/\ell_1=A/\ell$.

By the above discussion, the isometric diffeomorphism 
\[
f:\mb{S}^1\times\mb{R}/\langle\tau\rangle\to\partial V
\]
maps $\mb{S}^1\times\{0\}$ a parallel copy of $\mu$, the geodesic representative on $\partial V$ of the meridian. We now extend this diffeomorphism to a diffeomorphism of a suitable solid torus with boundary $\mb{S}^1\times\mb{R}/\langle\tau\rangle$ and $V$. In order to do so, observe that $\tau$ extends to a diffeomorphism of 
\[
U:=\mb{S}^1\times\mb{R}\times[-R,0]\,\left/\,(\theta,y,-R)\sim(\theta',y,-R)\right.
\]
which we keep denoting by $\tau$ with a slight abuse of notation, by the formula
\[
\tau(\theta,y,r)=\left(\theta+\alpha,y+s,r\right).
\]
The group $\langle\tau\rangle$ generated by this extension acts freely and properly discontinuously on $U$. The quotient $U/\langle\tau\rangle$ is a solid torus with boundary $\mb{S}^1\times\mb{R}\times\{0\}/\langle\tau\rangle$ and meridian given by $\mb{S}^1\times\{0\}\times\{0\}$. As $f:\partial U/\langle\tau\rangle\to\partial V$ maps the meridian of the source to the meridian of the target, it can be extended to a diffeomorphism $U/\langle\tau\rangle\to V$. 

From now on we work on $U$ whose boundary $\mb{S}^1\times\mb{R}\times\{0\}$ is equipped with the metric $\ell_1^2{\rm d}\theta^2+{\rm d}y^2$. We will consider metrics of the form $g={\rm d}r^2+a(r)^2{\rm d}\theta^2+b(r)^2{\rm d}y^2$. All these metrics are $\tau$-invariant. Hence, they descend to Riemannian metrics on $U/\langle\tau\rangle$. Note that $g$ is potentially singular at the $r=-R$ locus (which is a line). It is well-known that $g$ is smooth everywhere if $a'(-R)=2\pi$ (see \cite{BH} or \cite{FKP}). We choose $a(r)=a_{\rm fill}(r)$ and $b(r)=b_{\rm fill}(r)$ as given by Lemma \ref{lem:functionfill} with input $\ell_1=\ell$ and $\ell_2=1$. Note that property (2) of Lemma \ref{lem:functionfill} guarantees that $a'_{\rm fill}(-R)=2\pi$. Note also that property (4) of Lemma \ref{lem:functionfill} together with Lemma \ref{lem:sectional} implies that the scalar curvature $S_g$ of the resulting metric satisfies
\[
\left|\left(\frac{S_g}{6}\right)^{3/2}-1\right|\le\left|\left(1+c\frac{\pi^4}{\ell^4}\right)^{3/2}-1\right|\le \frac{3}{2}c\frac{\pi^4}{\ell^4}.
\]

We now compute 
\[
\int_V{\left|\frac{S_g}{6}\right|^{3/2}{\rm dvol}_g}.
\]
As $S_g$ is close to 1, we first compare it with ${\rm vol}(V,g)$ controlling the difference. Then we show that ${\rm vol}(V,g)$ is bounded by $(1-\pi^2/\ell^2)A/2$. In order to do these computations, we work on the fundamental domain $F:=\mb{S}^1\times[0,s]\times[-R,0]$ for the action of $\tau$ on $\mb{S}^1\times\mb{R}\times[-R,0]$. We have
\[
\left|\int_{V}{\left(\frac{|S_g|}{6}\right)^{3/2}\,{\rm dvol}_g}-{\rm vol}(V,g)\right|\le\int_V{\left|\left(\frac{|S_g|}{6}\right)^{3/2}-1\right|\,{\rm dvol}_g}\le \frac{3}{2}c\frac{\pi^4}{\ell^4}{\rm vol}(V,g).
\]
Let us compute the volume. By Lemma \ref{lem:functionfill} (5), we have $a_{\rm fill}b_{\rm fill}\le\ell e^{2r}$. Thus
\begin{align*}
{\rm vol}(V,g) &={\rm vol}(F,g)\\
&=\int_0^{A/\ell}\int_0^1\int_{-R}^0{a_{\rm fill}b_{\rm fill}\,{\rm d}r\,{\rm d}\theta\,{\rm d}y} &\\
 &=\frac{A}{\ell}\int_{-R}^0{a_{\rm fill}b_{\rm fill}{\rm d}r}\\
 &\le\frac{A}{\ell}\int_{-R}^0{\ell e^{2r}{\rm d}r}=\frac{A}{2}\left(1-\frac{\pi^2}{\ell^2}\right).
\end{align*}
In conclusion
\begin{align*}
\int_{V}{\left(\frac{|S_g|}{6}\right)^{3/2}\,{\rm dvol}_g} &=\left(\int_{V}{\left(\frac{|S_g|}{6}\right)^{3/2}\,{\rm dvol}_g}-{\rm vol}(V,g)\right)+{\rm vol}(V,g)\\
&\le\left(1+\frac{3}{2}c\frac{\pi^4}{\ell^2}\right){\rm vol}(V,g)\le\left(1+\frac{3}{2}c\frac{\pi^4}{\ell^2}\right)\frac{A}{2}\left(1-\frac{\pi^2}{\ell^2}\right)
\end{align*}
as desired.
\end{proof}

\section{Proof of Theorem \ref{thm:main}}

There is a tubular neighborhood $T$ of $\gamma$, parametrized by the so-called Fermi coordinates, where the hyperbolic metric of $M$ can be written as
\[
g_M={\rm d}r^2+4\pi^2\sinh(r)^2{\rm d}\theta^2+\ell^2\cosh(r)^2{\rm d}y^2
\]
where $y\in[0,1]$ is a length parameter along $\gamma$ while $r\in[0,R],\theta\in\mb{S}^1=\mb{R}/\mb{Z}$ are respectively the radial and angular parameters around $\gamma$. 

Heuristically speaking, the basic picture is as follows. Ideally, one should have $\sigma(M-\gamma)=-6^{3/2}{\rm vol}(M-\gamma)^{2/3}$ (similar to Theorem \ref{thm:yamabe}) and would like to exploit the inequality $\sigma(M-\gamma)\ge-(\int_{M-\gamma}{|S_g|^{3/2}{\rm dvol}_g})^{2/3}$ (similar to Proposition \ref{pro:integral scalar}) to compare ${\rm vol}(M-\gamma)$ to the integral of the scalar curvature for a Riemannian metric we obtain by modifying of the hyperbolic metric $g_M$ on $T-\gamma$ (replacing with one of those defined using Lemma \ref{lem:functiondrill}). This strategy cannot be carried out exactly as stated since Theorem \ref{thm:yamabe} and Proposition \ref{pro:integral scalar} only work for closed manifolds ($M-\gamma$ has a cusp). However, this is just a technical obstacle that we will bypass using a Dehn filling trick (as in \cite{AST}).

Let us start with the proof. It is convenient to define the following. We will call {\em $[r_0,r_1]$-shell} the submanifold of $T$ that in Fermi coordinates corresponds to the region where the radial coordinate $r$ varies in the interval $[r_0,r_1]$.

Fix $\ep>0$ arbitrary. According to our strategy, we replace the metric $g_M$ on the $[\ep,R]$-shell $S_\ep$ with the metric
\[
g_{S_\ep}:={\rm d}r^2+4\pi^2a_{\rm drill}(r)^2{\rm d}\theta^2+\ell^2b_{\rm drill}(r)^2{\rm d}y^2
\]
where $a_{\rm drill},b_{\rm drill}$ are the functions provided by Lemma \ref{lem:functiondrill} with input $R$. Note that $g_{S_\ep}$ coincides with $g_M$ on the $[2R/3,R]$-shell and with
\[
g_C:={\rm d}r^2+4\pi^2\sinh(R)^2e^{2r}{\rm d}\theta^2+\ell^2\cosh(R)^2e^{2r}{\rm d}y^2
\]
in the $[\ep,R/3]$-shell. The metric $g_C$ naturally extends to every $r\le\ep$ with the same formula. The extension is isometric to a hyperbolic cusp where the slices $r={\rm constant}$ are horospherical tori.

The next step is just a trick to remedy the fact that the sigma invariant computation only works for closed hyperbolic manifolds. What we do is to realize $M-\gamma$ as a geometric limit of closed hyperbolic manifolds using long Dehn fillings. By standard properties of the geometric convergence, the volume of $M-\gamma$ can be computed as the limit of the volume of the approximating hyperbolic manifolds.

Consider the smaller torus $T_\ep:=T-S_\ep$. Choose an infinite sequence of pairwise distinct essential simple closed curves $\mu_n$ on the torus $\partial T_\ep$ and define $M_n$ to be the Dehn filling of $M-T_\ep$ with filling slope $\mu_n$, that is 
\[
M_n=(M-T_\ep)\cup_{\mu_n}\mb{D}^2\times\mb{S}^1
\]
where the gluing map $\mb{D}^2\times\mb{S}^1\to\partial T_\ep$ is completely determined (up to isotopy) by the requirement that a meridian of $\mb{D}^2\times\mb{S}^1$ is mapped to $\mu_n$. By Thurston's Hyperbolic Dehn Filling Theorem (see for example \cite[Chapter 15]{Martelli}), for large enough $n$ we have that $M_n$ admits a hyperbolic metric $g_{M_n}$ whose volume ${\rm vol}(M_n,g_{M_n})$ converges to the volume of $(M-\gamma,g_{M-\gamma})$ where $g_{M-\gamma}$ denotes the hyperbolic metric on $M-\gamma$.

By Proposition \ref{pro:metric on tube}, for every $n$ large enough we can smoothly extend the metric $g_C$ to a metric $g_{\mb{D}^2\times\mb{S}^1,n}$ on the added solid torus $\mb{D}^2\times\mb{S}^1$ so that
\[
\int_{\mb{D}^2\times\mb{S}^1}{|S_{g_n}|^{3/2}{\rm dvol}_{g_n}}\to 6^{3/2}\cdot\frac{{\rm Area}(\partial T_\ep,g_C)}{2}=e^\ep\sinh(R)\cosh(R)\pi\ell.
\]

By Theorem \ref{thm:yamabe}, we have $-6\cdot{\rm vol}(M,g_{M_n})^{2/3}=\sigma(M_n)$.

By Proposition \ref{pro:integral scalar} we have
\[
\sigma(M_n)\ge-\left(\int_{M_n}{|S_{g_n}|^{3/2}{\rm dvol}_{g_n}}\right)^{2/3}
\]
for any Riemannian metric $g_n$ on $M_n$. In particular, this holds for the Riemannian metric defined by 
\[
g_n:=\left\{
\begin{array}{ll}
 g_M &\text{\rm on $M-T$},\\
     g_{S_\ep} &\text{\rm on $S_\ep$},\\
     g_{\mb{D}^2\times\mb{S}^1,n} &\text{\rm on $\mb{D}^2\times\mb{S}^1$}.\\
\end{array}
\right.
\]
Thus,
\[
6^{3/2}\cdot{\rm vol}(M,g_{M_n})\le\int_{M_n}{|S_{g_n}|^{3/2}{\rm dvol}_{g_n}}.
\]

We split the integral as
\begin{align*}
\int_{M_n}{|S_{g_n}|^{3/2}{\rm dvol}_{g_n}} &=\int_{M-T}{|S_{g_M}|^{3/2}{\rm dvol}_{g_M}}+\int_{S_\ep}{|S_{g_S}|^{3/2}{\rm dvol}_{g_S}}\\
 &+\int_{\mb{D}^2\times\mb{S}^1}{|S_{g_{\mb{D}^2\times\mb{S}^1,n}}|^{3/2}{\rm dvol}_{g_{\mb{D}^2\times\mb{S}^1,n}}}.
\end{align*}

As the metric $g_n$ coincides with the hyperbolic metric on $M-T$, the first term is
\[
\int_{M-T}{|S_g|^{3/2}{\rm dvol}_g}=6^{3/2}\cdot{\rm vol}(M-T)\le 6^{3/2}{\rm vol}(M).
\]

On $S_\ep$, we compute 
\begin{align*}
&\int_0^\ell\int_0^1\int_\ep^R{|S_g|^{3/2}a_{\rm drill}b_{\rm drill}\,{\rm d}r\,{\rm d}\theta\,{\rm d}y}\\
&\le 2\pi\ell(2c)^{3/2}\frac{1}{R^3}\int_\ep^R{a_{\rm drill}b_{\rm drill}\,{\rm d}r} &\text{by Lemma \ref{lem:functiondrill} (4)}\\
&\le 2\pi\ell(2c)^{3/2}\frac{1}{R^3}\int_\ep^R{\sinh(R)\cosh(R)e^{2r}\,{\rm d}r} &\text{by Lemma \ref{lem:functiondrill} (5)}\\
&=(2c)^{3/2}\pi\ell\frac{\sinh(R)\cosh(R)}{R^3}(e^{2R}-e^{2\ep}).
\end{align*}

Lastly, on $\mb{D}^2\times\mb{S}^1$ we have
\[
\int_{\mb{D}^2\times\mb{S}^1}{|S_{g_{\mb{D}^2\times\mb{S}^1,n}}|^{3/2}{\rm dvol}_{g_{\mb{D}^2\times\mb{S}^1,n}}}\to 6^{3/2}e^\ep\sinh(R)\cosh(R)\pi\ell.
\]

Putting together the three terms and dividing by $6^{3/2}$, we obtain
\[
{\rm vol}(M_n,g_{M_n})\le{\rm vol}(M)+\frac{(2c)^{3/2}}{6^{3/2}}\pi\ell\frac{\sinh(R)\cosh(R)}{R^3}(e^{2R}-e^{2\ep})+e^\ep\sinh(R)\cosh(R)\pi\ell
\]
Passing to the limit as $n\to\infty$ and then as $\ep\to 0$, we get
\[
{\rm vol}(M-\gamma)\le{\rm vol}(M)+\pi\frac{\ell}{R}\left(\frac{(2c)^{3/2}}{6^{3/2}}\frac{\sinh(R)\cosh(R)(e^{2R}-1)}{R^2}+\frac{\sinh(R)\cosh(R)}{R}\right).
\]
In order to conclude, we just observe that that $\sinh(R)\cosh(R)(e^{2R}-1)/R^2\le c_1$ and that $\sinh(R)\cosh(R)/R\le c_2$ for $R\in (0,1)$ (where $c_1$ can be chosen to be roughly $c_1=11.59$ and $c_2=1.81$). This concludes the proof of the Theorem \ref{thm:main}.\qed


\section{The proof of Theorem \ref{thm:main'}}

Let $C$ be the cusp neighborhood bounded by the horospherical torus $\partial C$. By Proposition \ref{pro:metric on tube}, we can find a metric $g_\mu$ on the Dehn filling
\[
M_\mu=(M-C)\cup_\mu\mb{D}^2\times\mb{S}^1
\]
that agrees with the hyperbolic metric of $M$ on $M-C$ and satisfies
\[
\int_{\mb{D}^2\times\mb{S}^1}{|S_g|^{3/2}{\rm dvol}_g}\le 6^{3/2}\frac{A}{2}\left(1-\frac{\pi^2}{\ell^2}\right)\left(1+c\frac{\pi^4}{\ell^4}\right).
\]

By Theorem \ref{thm:yamabe}, we have $-6\cdot{\rm vol}(M_\mu,g_{M_\mu})^{2/3}=\sigma(M_\mu)$.

By Proposition \ref{pro:integral scalar} we have
\[
\sigma(M_\mu)\ge-\left(\int_{M_\mu}{|S_g|^{3/2}{\rm dvol}_g}\right)^{2/3}.
\]
for every Riemannian metric $g$ on $M_\mu$. In particular
\[
6^{3/2}\cdot{\rm vol}(M_\mu)\le \int_{M_\mu}{\left|S_{g}\right|^{3/2}{\rm dvol}_{g}}
\]

We use the metric $g=g_\mu$ and compute the integral
\[
\int_{M_\mu}{\left|S_{g_{\mu}}\right|^{3/2}{\rm dvol}_{g_{\mu}}}=\int_{M-C}{|S_{g_M}|^{3/2}{\rm dvol}_{g_M}}+\int_{\mb{D}^2\times\mb{S}^1}{\left|S_{g_{\mu}}\right|^{3/2}{\rm dvol}_{g_{\mu}}}.
\]

As the metric $g_{\mu}$ coincides with the hyperbolic metric on $M-C$, the first term is
\[
\int_{M-C}{|S_g|^{3/2}{\rm dvol}_g}=6^{3/2}\cdot{\rm vol}(M-C)=6^{3/2}\left({\rm vol}(M)-\frac{A}{2}\right).
\]

By Proposition \ref{pro:metric on tube}, on $\mb{D}^2\times\mb{S}^1$ we have 
\[
\int_{\mb{D}^2\times\mb{S}^1}{|S_g|^{3/2}{\rm dvol}_g}\le 6^{3/2}\frac{A}{2}\left(1-\frac{\pi^2}{\ell^2}\right)\left(1+c\frac{\pi^4}{\ell^4}\right).
\]

Putting together the terms and dividing by $6^{3/2}$, we obtain
\[
{\rm vol}(M_\mu)+\frac{A}{2}\left(\frac{\pi^2}{\ell^2}-c\frac{\pi^4}{\ell^4}+c\frac{\pi^6}{\ell^6}\right)\le{\rm vol}(M).
\]
This concludes the proof of the Theorem \ref{thm:main'}.\qed

\bibliographystyle{amsalpha.bst}
\bibliography{bibliography}

\Addresses

\end{document}